\documentclass[12pt,english]{smfart}

\linespread{1.05}     

\usepackage[french,main=english]{babel}
\usepackage[utf8, latin1]{inputenc} 
\usepackage[T1]{fontenc}
\usepackage{stmaryrd,ae,euscript,enumerate,mathptmx,newtxtext,newtxmath,dsfont}
\usepackage[centering]{geometry}
\usepackage[cm]{aeguill}   

\newcommand{\inv}{^{\raisebox{.2ex}{$\scriptscriptstyle-1$}}}   

\tolerance=10000

\usepackage[dvipsnames]{xcolor}   
\usepackage{xparse}
\usepackage{xr-hyper}
\usepackage[linktocpage=true,colorlinks=true,hyperindex,citecolor= Blue,linkcolor= Blue]{hyperref}   

\newtheorem{theorem}{Theorem}[section] 
\newtheorem{proposition}[theorem]{Proposition}
\newtheorem{lemma}[theorem]{Lemma}
\newtheorem{corollary}[theorem]{Corollary}
\theoremstyle{definition}

\newtheorem{remark}[theorem]{Remark} 

\begin{document}
\author{A. Goswami}
\address{
[1] Department of Mathematics and Applied Mathematics, University of Johannesburg, P.O. Box 524, Auckland Park 2006, South Africa. [2] National Institute for Theoretical and Computational Sciences (NITheCS), South Africa.}
\email{agoswami@uj.ac.za}

\title{Some classes of subsemimodule spaces}

\subjclass{16Y60; 13C13; 16D80; 54F65 }





\keywords{semiring, semimodule, subtractive-, extraordinary-, prime-, primal-, strongly irreducible-subsemimodules, closed subbasis topology, generic point, connectedness.}

\begin{abstract}
The aim of this paper is to study the topological properties of some classes of subsemimodules  endowed with a subbasis closed-set topology. We show that such spaces are $T_0$. When the semimodule is finitely generated, those spaces are compact as well. We characterize subsemimodule spaces for which every nonempty irreducible closed set has a unique generic point. We give a sufficient condition for a connected subsemimodule space, and using the notion of strongly disconnectedness, we determine compact disconnected subsemimodule spaces. Finally, we discuss continuous maps between subsemimodule spaces. 
\end{abstract}
\maketitle

\section{Introduction}

Since the introduction of semirings in \cite{V34}, it is natural to compare and extend results from rings to semirings. One  may think that  semirings can always to be extended to rings, but \cite{V39} gave examples of semirings which can not be embedded in rings. Moreover, the lack of additive inverses in semirings makes the semiring theory considerably different than that of rings.

As modules appear naturally in the study of rings, the notion of semimodules also inevitable in structural understanding of semirings. Some intensive study of structure of semimodules over a semiring can be found in \cite{G99, AA10, T87, CL22, H17}. Compare to this, topological study on various classes of  subsemimodules of a semimodule is less investigated.  A Zariski topology on  prime $k$-subsemimodules of a very strong multiplication semimodule has been considered in \cite{AAT11}. For a multiplication and finitely generated semimodule, in \cite{HPH21} (see also \cite{HHP22}), it has been proved that the class of prime subsemimodules endowed with Zariski topology is a spectral space.

The aim of this paper is twofold. On the one hand, our work generalizes the topological results in \cite{AAT11, HPH21, HHP22} to various classes of subsemimodule spaces; on the other hand, it extends some of the results from \cite{DG22} on ideal spaces ((\textit{i.e.}, a class of ideal of a commutative ring endowed with closed subbasis topology) to the semimodule context.
 
\section{Preliminaries}\label{prelim}
Let us recall some elementary definitions and facts about semimodules. For a detailed study of semimodules over a semiring, we refer our readers to \cite{G99}. a \emph{semiring} is a system $(R,+,0,\cdot, 1)$ such that $(R,+,0)$ is a commutative monoid, $(R, \cdot,1)$ is a monoid, $0r=0=r0$ for all $r\in R,$ and $\cdot$ distributes over $+$ both from the left and
from the right sides. In this paper all semirings are assumed to be commutative, \textit{i.e.}, $(R, \cdot,1)$ is a commutative monoid. If $R$ is semiring, then a \emph{(left) $R$-semimodule} is an additive commutative monoid $M$ together with a map (called \emph{scalar multiplication}) $R\times M\to M$, denoted by $(r,m)\mapsto rm$, such that (i) $(r+r')m=rm+r'm,$ (ii) $r(m+m')=rm+rm',$ (iii) $(rr')m=r(r'm),$ (iv) $1m=m$, and (v) $r0=0=0m$ for all $r,$ $r'\in R$ and $m, m'\in M.$ Note that we have used $0$ to denote both the additive identity of the semiring $R$ and that of the monoid $M$.

A nonempty subset $N$ of a left $R$-semimodule $M$ is called a \emph{subsemimodule} of $M$ if $N$ is closed under addition and scalar multiplication. We now define some distinguished classes of subsemimodules of a semimodule $M$. Some of these classes of subsemimodules are well-known and we provide reference(s), whereas, to best of our knowledge, others are introduced here for the first time. 

A subsemimodule $N$ is called \emph{subtractive} or \emph{k-subsemimodule} \cite{G99} if $x+y\in N$ and $x\in N$ implies $y\in N;$ whereas $N$ is called \emph{strong} \cite{G99} if $x+y\in $ implies $x,$ $y\in N$ for all $x, y\in M$. A \emph{proper} subsemimodule $N$ is such that $N\neq M.$ A proper subsemimodule $N$ is called \emph{maximal} \cite{Y10} if there is no other proper subsemimodule containing $N$. A proper
subsemimodule $N$  is called \emph{prime} \cite{Y10, YOT10} (\emph{primary}) \cite{AK10, TZ13} if $rm \in N$ with
$r \in R$ and $m \in M$ implies that $m \in N$ or $r \in (N : M)=\{r\in R\mid rM\subseteq N \}$ ($r^n \in (N : M)=\{r\in R\mid rM\subseteq N \}$ for some $n\in \mathds{N}$). A proper subsemimodule $N$ is called a \emph{weakly prime} \cite{BM14} subsemimodule if whenever $0\neq rm \in N,$ for some $r \in R$ and $m \in M,$ then either $m \in N$ or
$rM\subseteq N.$ An element $r$ of $R$ is called \emph{prime to $N$} if $rm \in N$ with
$r \in R$ and $m \in M$ implies that $m \in N$. Denote the set of all prime to $N$ elements of $R$ by $\mathrm{Prm}(N).$ A proper subsemimodule $N$ is called \emph{primal} \cite{BM14} if $R\setminus \mathrm{Prm}(N)$ is an ideal of $R$. A \emph{semiprime} subsemimodule $N$ is the intersection of prime semimodules of $M$. For an alternative notion of a semiprime semimodule, see \cite{TZ13}.  The \emph{radical} \cite{AK10} $\sqrt{N}$ of a subsemimodule $N$ is the intersection fo all prime subsemimodules of $M$ containing $N$. A prime subsemimodule $N$ is called \emph{extraordinary} \cite{HPH21} if for any semiprime subsemimodules $L$ and $K$ of $M,$ $L \cap K \subseteq  N$
implies that $L \subseteq  N$ or $K \subseteq N.$ 

A subsemimodule $N$ is called \emph{strongly irreducible} if for any two subsemimodules $L$ and $K$ of $M$, $L\cap K\subseteq N$ implies $L\subseteq N$ or $K\subseteq N.$  A subsemimodule $N$  that is not the intersection of two (any set of) subsemimodules of $M$ in which $N$ is properly contained is called \emph{irreducible} (\emph{completely irreducible}). If a subsemimodule $N$ is generated by a single  element (a finite set of elements) of $M$, then $N$ is called  \emph{cyclic} (\emph{finitely generated}). A nonzero subsemimodule $N$ is called a \emph{minimal} subsemimodule if $N$ contains no other nonzero subsemimodules of $M.$ A \emph{minimal prime} subsemimodule $N$ is both a minimal and a prime subsemimodule. To denote any one of the above-mentioned distinguished class of subsemimodules of $M$, we use the notation $\mathrm{Dis}_M$ and we assume that $M\notin \mathrm{Dis}_M$; whereas, $\mathrm{Sub}_M$ denotes the set of all subsemimodules of $M$.

\section{Subsemimodule spaces}\label{ssms}

Let $M$ be an $R$-semimodule. For any subset $S$ of $M$ and for any $\mathrm{Dis}_M$, define
\begin{equation}\label{vsl}
\mathcal{V}(S)=\{L\in \mathrm{Dis}_M\mid S\subseteq L\}.	
\end{equation} 

\begin{lemma}\label{bpvs}
For a given $R$-semimodule $M$, the subsets of $\mathrm{Dis}_M$ determined by $\mathcal{V}$ have the following properties.
\begin{enumerate}[\upshape (1)]
	
\item $\mathcal{V}(S)\supseteq \mathcal{V}(S')$ for any two nonempty subsets $S$ and $S'$ of $M$ such that $S\subseteq S'$.
	
\item\label{vsvs} $\mathcal{V}(S)=\mathcal{V}(\langle S \rangle)$ for any nonempty subset $S$ of $M$.	
	
\item  \label{a} $\mathcal{V}(0)=\mathrm{Dis}_M$ and $\mathcal{V}(M)=\emptyset.$

\item \label{b} $\bigcap_{\lambda\in \Lambda}\mathcal{V}(N_{\lambda})=\mathcal{V}\left( \sum_{\lambda \in \Lambda} N_{\lambda}\right)$ for all $N_{\lambda}\in \mathrm{Sub}_M$ and for all $\lambda\in \Lambda$.

\item\label{vnuvn} $\mathcal{V}(N)\cup \mathcal{V}(N') \subseteq \mathcal{V}(N\cap N')$ for all $N,$ $N'\in \mathrm{Sub}_M.$

\item\label{vnsvn} $\mathcal{V}(N)\supseteq \mathcal{V}(\sqrt{N})$ for any subsemimodule $N$ of $M$.

\item\label{axaa} Let $N$ be subsemimodule of $M$. Define $ N^{\omega}:=\bigcap\{L\mid L\in \mathcal{V}(N)\}$. Then $N\subseteq N^{\omega}$.

\item \label{axa}   
$N=N^{\omega}$, whenever $N\in\mathrm{Dis}_M.$

\item \label{hahxa}  $\mathcal{V}(N)=\mathcal{V}(N^{\omega}),$ for all subsemimodule $N$ of $M$.

\item\label{xbxa} For any two subsemimodules $N_1$ and $N_2$ of $M$, $\mathcal{V}(N_1)\subseteq \mathcal{V}(N_2)$ if and only if $N_2^{\omega}\subseteq N_1^{\omega}$.
\end{enumerate}
\end{lemma}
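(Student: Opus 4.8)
The plan is to prove the ten-part Lemma~\ref{bpvs}, focusing on the genuinely substantive parts and dispatching the formal ones quickly.
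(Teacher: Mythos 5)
There is a genuine gap here: your submission is not a proof but only an announcement of intent. You state that you will ``focus on the genuinely substantive parts and dispatch the formal ones quickly,'' but then supply no argument for any of the ten parts --- not even a classification of which parts you consider substantive. The paper itself dismisses the lemma with the single word ``Straightforward,'' so the two texts are comparable in length, but the paper is entitled to do this for a routine lemma whereas a blind proof attempt is being asked to supply exactly the routine verifications that the author omitted. As it stands, nothing has been verified.

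To make this an actual proof, you would need at minimum the following. Parts (1)--(3) are immediate from the definition $\mathcal{V}(S)=\{L\in \mathrm{Dis}_M\mid S\subseteq L\}$, but (2) does use that every $L\in\mathrm{Dis}_M$ is a subsemimodule, so $S\subseteq L$ iff $\langle S\rangle\subseteq L$; and the second half of (3) uses the standing convention $M\notin\mathrm{Dis}_M$. Part (4) needs the observation that $\sum_\lambda N_\lambda$ is the smallest subsemimodule containing every $N_\lambda$, so $L$ contains all $N_\lambda$ iff it contains their sum. Part (6) needs $N\subseteq\sqrt{N}$, which holds since $\sqrt{N}$ is the intersection of the primes containing $N$ (and is harmless even when $\mathcal{V}(\sqrt N)$ is empty). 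Part (7) has an edge case worth a sentence: if $\mathcal{V}(N)=\emptyset$ then $N^{\omega}=\bigcap\emptyset=M$ and the inclusion still holds. Part (8) is just the remark that $N\in\mathrm{Dis}_M$ gives $N\in\mathcal{V}(N)$, whence $N^{\omega}\subseteq N$, and (7) gives the reverse. For (9), one direction is (1) applied to $N\subseteq N^{\omega}$; for the other, $L\in\mathcal{V}(N)$ forces $N^{\omega}\subseteq L$ by the definition of $N^{\omega}$ as an intersection, so $L\in\mathcal{V}(N^{\omega})$. Part (10) then follows formally: $\mathcal{V}(N_1)\subseteq\mathcal{V}(N_2)$ gives $N_2^{\omega}=\bigcap_{L\in\mathcal{V}(N_2)}L\subseteq\bigcap_{L\in\mathcal{V}(N_1)}L=N_1^{\omega}$, and conversely $N_2^{\omega}\subseteq N_1^{\omega}$ yields $\mathcal{V}(N_1)=\mathcal{V}(N_1^{\omega})\subseteq\mathcal{V}(N_2^{\omega})=\mathcal{V}(N_2)$ by (9) and (1). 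None of this is deep, but all of it is absent from your proposal, and the dependency structure ((7)+(8) feeding (9), which feeds (10)) is precisely the ``substantive part'' your plan promised to address.
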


\begin{proof}
Straightforward.
\end{proof}

Note that we have equality in Lemma \ref{bpvs}(\ref{vnsvn}) whenever $\mathrm{Dis}_M$ is the class of prime, minimal prime, or maximal subsemimodules.
Our aim is to construct a topology on a $\mathrm{Dis}_M$ using the subsets of the form $\{\mathcal{V}(S)\},$ and instead of (\ref{vsl}), thanks to Lemma \ref{bpvs}(\ref{vsvs}), it is now sufficient to consider only the subsets
\begin{equation}
\mathcal{V}(N)=\{L\in \mathrm{Dis}_M\mid N\subseteq L\}\qquad (N\in \mathrm{Sub}_M).
\end{equation}
From Lemma \ref{bpvs}(\ref{a}) it follows \[\bigcap_{N\in \mathrm{Sub}_M} \mathcal{V}(N)=\emptyset,\] and by \cite[Theorem\,15\,A.13., p.\,254]{C66}, this implies that the collection $\{\mathcal{V}(N)\}_{N\in \mathrm{Sub}_M}$ as  a closed subbasis generates a unique topology on  $\mathrm{Dis}_M$. We denote this topology by $\tau_{\mathrm{Dis}_M}$. 

In general, the sets $\mathcal{V}(N)$ are not closed sets but only subbasis closed sets. This is due the fact that for an arbitrary $\mathrm{Dis}_M$, we do not have equality in Lemma \ref{bpvs}(\ref{vnuvn}) (\textit{i.e.}, sets $\mathcal{V}(N)$ are not closed under finite union). It has been shown in \cite[Lemma 3.2]{HPH21} that equality holds in Lemma \ref{bpvs}(\ref{vnuvn}) if and only if $M$ is a top semimodule, and in that case, our closed-subasis topology coincides with the Zariski topology endowed on any $\mathrm{Dis}_M$. With an abuse of notation, we simply write $\mathrm{Dis}_M$ to denote the topological space $(\mathrm{Dis}_M, \tau_{\mathrm{Dis}_M})$. 

For compactness of a subsemimodule space $\mathrm{Dis}_M$, we require the property:
\begin{equation}\label{scc}
\mathcal{V}(N)=\emptyset \implies N=M.
\end{equation}
In general, (\ref{scc}) is not true for an arbitrary class of subsemimodules. However, if $M$ is finitely generated, then (\ref{scc}) holds for any $\mathrm{Dis}_M$. For any semimodule, the following result gives a necessary condition for a class of subsemimodules have the property (\ref{scc}). 

\begin{lemma}\label{ncc}
If a subsemimodule space $\mathrm{Dis}_M$ has the property $(\ref{scc})$, then $\mathrm{Dis}_M$ contains all maximal subsemimodules of $M$.
\end{lemma}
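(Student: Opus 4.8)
The plan is to argue by contradiction, exploiting the defining property of a maximal subsemimodule. Let $N$ be an arbitrary maximal subsemimodule of $M$; by definition $N$ is proper, so $N \neq M$. Suppose, toward a contradiction, that $N \notin \mathrm{Dis}_M$. I would then examine the subbasis closed set $\mathcal{V}(N) = \{L \in \mathrm{Dis}_M \mid N \subseteq L\}$ with the goal of showing it is empty, since property $(\ref{scc})$ would then force $N = M$, contradicting properness.

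The key step is to observe that maximality severely constrains which $L$ can lie in $\mathcal{V}(N)$. Any $L \in \mathcal{V}(N)$ satisfies $N \subseteq L$ and, since $L \in \mathrm{Dis}_M$ while $M \notin \mathrm{Dis}_M$ by the standing convention, $L$ is a \emph{proper} subsemimodule of $M$. But $N$ is maximal, so the only proper subsemimodule containing $N$ is $N$ itself; hence $L = N$. This establishes the inclusion $\mathcal{V}(N) \subseteq \{N\}$.

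Finally, I would combine this inclusion with the contradiction hypothesis $N \notin \mathrm{Dis}_M$. Every candidate element of $\mathcal{V}(N)$ equals $N$, yet $N$ is excluded from $\mathrm{Dis}_M$, so no element survives and $\mathcal{V}(N) = \emptyset$. Property $(\ref{scc})$ then yields $N = M$, contradicting the properness of the maximal subsemimodule $N$. Therefore $N \in \mathrm{Dis}_M$, and since $N$ was an arbitrary maximal subsemimodule, $\mathrm{Dis}_M$ contains all of them.

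I do not anticipate a serious obstacle: once $\mathcal{V}(N)$ is unwound, the statement is essentially an immediate consequence of maximality. The only point requiring care is the standing assumption $M \notin \mathrm{Dis}_M$, which is precisely what guarantees that members of $\mathrm{Dis}_M$ are proper and thereby lets maximality collapse $\mathcal{V}(N)$ to at most $\{N\}$; without it the crucial inclusion $\mathcal{V}(N) \subseteq \{N\}$ could fail.
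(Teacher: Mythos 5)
Your proof is correct and takes essentially the same route as the paper: both argue by contradiction, assuming a maximal $N\notin\mathrm{Dis}_M$, deducing $\mathcal{V}(N)=\emptyset$, and invoking property~(\ref{scc}) to force $N=M$, contradicting properness. The only difference is that you explicitly justify the step $\mathcal{V}(N)=\emptyset$ (via the standing convention $M\notin\mathrm{Dis}_M$ and the collapse $\mathcal{V}(N)\subseteq\{N\}$), which the paper leaves implicit.
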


\begin{proof} Let $\mathrm{Dis}_M$ has the property (\ref{scc}). Let $N$ be a maximal subsemimodule of $M$ such that $N\notin \mathrm{Dis}_M$. This implies $\mathcal{V}(N)=\emptyset$ and hence $N=M$, a contradiction.
\end{proof} 

If $M$ is finitely generated, then the converse of the above Lemma \ref{ncc} is also true. Since our topology is generated by subbasis closed sets, in the proof of the following theorem we need to rely on Alexander's subbasis theorem.

\begin{theorem}\label{comp} 
If $M$ is a finitely generated $R$-semimodule, then any subsemimodule space $\mathrm{Dis}_M$ is compact. 
\end{theorem}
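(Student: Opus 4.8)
The plan is to apply Alexander's subbasis theorem in its closed-set formulation. Since $\{\mathcal{V}(N)\}_{N\in\mathrm{Sub}_M}$ is, by construction, a closed subbasis generating $\tau_{\mathrm{Dis}_M}$, the space $\mathrm{Dis}_M$ is compact provided that every subfamily of this subbasis having the finite intersection property has nonempty total intersection. I would therefore fix an arbitrary family $\{\mathcal{V}(N_\lambda)\}_{\lambda\in\Lambda}$ of subbasic closed sets with the finite intersection property and reduce the whole theorem to showing that $\bigcap_{\lambda\in\Lambda}\mathcal{V}(N_\lambda)\neq\emptyset$.

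The first step is to rewrite this total intersection using Lemma \ref{bpvs}(\ref{b}), which gives $\bigcap_{\lambda\in\Lambda}\mathcal{V}(N_\lambda)=\mathcal{V}\!\left(\sum_{\lambda\in\Lambda}N_\lambda\right)$. Because $M$ is finitely generated, property (\ref{scc}) is at our disposal, and its contrapositive says that $\mathcal{V}\!\left(\sum_{\lambda\in\Lambda}N_\lambda\right)$ is nonempty as soon as $\sum_{\lambda\in\Lambda}N_\lambda\neq M$. Thus the entire problem collapses to proving that the (possibly infinite) sum $\sum_{\lambda\in\Lambda}N_\lambda$ is a \emph{proper} subsemimodule of $M$.

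I would establish properness by contradiction, and this is where finite generation does the real work. Suppose $\sum_{\lambda\in\Lambda}N_\lambda=M$ and write $M=\langle m_1,\dots,m_k\rangle$. Each generator $m_i$ lies in $\sum_{\lambda\in\Lambda}N_\lambda$, so by the very definition of a sum of subsemimodules it is a \emph{finite} sum of elements taken from finitely many of the $N_\lambda$. Gathering the indices occurring across all of $m_1,\dots,m_k$ produces a finite subset $F\subseteq\Lambda$ with $m_i\in\sum_{\lambda\in F}N_\lambda$ for every $i$, whence $M=\sum_{\lambda\in F}N_\lambda$. Applying Lemma \ref{bpvs}(\ref{b}) over the finite index set $F$ together with Lemma \ref{bpvs}(\ref{a}) yields $\bigcap_{\lambda\in F}\mathcal{V}(N_\lambda)=\mathcal{V}\!\left(\sum_{\lambda\in F}N_\lambda\right)=\mathcal{V}(M)=\emptyset$, which contradicts the finite intersection property of the family. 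Hence $\sum_{\lambda\in\Lambda}N_\lambda\neq M$, and the reduction above finishes the argument.

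The main obstacle, and indeed the only genuinely non-formal point, is the passage from the infinite family to the finite subset $F$: it rests entirely on the fact that an element of an arbitrary sum of subsemimodules involves only finitely many nonzero summands, so that finite generation of $M$ forces all of $M$ to be already captured by finitely many of the $N_\lambda$. Everything else is a purely mechanical dictionary between the finite intersection property and the subbasic closed sets $\mathcal{V}(N)$ furnished by Lemma \ref{bpvs}.
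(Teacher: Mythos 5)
Your proposal is correct and follows essentially the same route as the paper's proof: Alexander's subbasis theorem, the identity $\bigcap_{\lambda}\mathcal{V}(N_\lambda)=\mathcal{V}\left(\sum_{\lambda}N_\lambda\right)$ from Lemma \ref{bpvs}(\ref{b}), property (\ref{scc}), and finite generation to extract a finite subfamily; you merely argue in the contrapositive direction (finite intersection property implies nonempty total intersection) where the paper assumes an empty total intersection and produces an empty finite subintersection. Your only addition is that you spell out the finite-extraction step (each generator $m_i$ lies in a sum involving finitely many $N_\lambda$), which the paper compresses into ``Since $M$ is finitely generated, we have the desired finite subset.''
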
 

\begin{proof}   
Suppose $\bigcap_{\lambda\in \Lambda}K_{ \lambda}=\emptyset$ for a family  $\{K_{ \lambda}\}_{\lambda \in \Lambda}$ of subbasis closed sets of a subsemimodule space $\mathrm{Dis}_M$. By Alexander's subbasis theorem, it suffices to show that $\bigcap_{ i\,=1}^{ n}K_{ \lambda_i}=\emptyset,$ for some finite subset $\{\lambda_1, \ldots, \lambda_n\}$ of $\Lambda$. Let $\{N_{ \lambda}\}_{\lambda \in \Lambda}$ be a class of subsemimodules of $M$ such  that $K_{ \lambda}=\mathcal{V}(N_{ \lambda})$ for all $\lambda \in \Lambda,$  By Lemma \ref{bpvs}(\ref{b}),   $\mathcal{V}\left(\sum_{\lambda \in \Lambda}N_{ \lambda}\right)=\emptyset,$ and by property  (\ref{scc}), this implies $ \sum_{\lambda \in \Lambda}N_{ \lambda}=M.$ Since $M$ is finitely generated, we have the desired finite subset of $\Lambda$.
\end{proof}  

By \cite{P94}),  a topological space $X$ is $T_0$ if and only if specialization order of $X$ is a partial order, and this implies

\begin{proposition}
Every $\mathrm{Dis}_M$ is  a $T_0$-space. 
\label{ct0t1}  
\end{proposition}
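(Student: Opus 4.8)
The plan is to invoke the cited characterization \cite{P94}: a topological space is $T_0$ precisely when its specialization preorder is antisymmetric, i.e.\ a partial order. Since the specialization relation is automatically reflexive and transitive, the entire task reduces to identifying this preorder explicitly on $\mathrm{Dis}_M$ and verifying antisymmetry.

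First I would compute the closure of a singleton $\{N'\}$ for an arbitrary point $N'\in\mathrm{Dis}_M$, and claim that $\overline{\{N'\}}=\mathcal{V}(N')$. One inclusion is immediate: $\mathcal{V}(N')$ is a subbasis-closed set (hence closed) containing $N'$, because $N'\subseteq N'$ gives $N'\in\mathcal{V}(N')$; so $\overline{\{N'\}}\subseteq\mathcal{V}(N')$. For the reverse inclusion, take any $N\in\mathcal{V}(N')$, so $N'\subseteq N$, and let $C$ be any closed set containing $N'$. By construction of $\tau_{\mathrm{Dis}_M}$, the set $C$ is an intersection of finite unions of subbasis-closed sets $\mathcal{V}(K)$; membership $N'\in C$ forces, within each such finite union, at least one term $\mathcal{V}(K)$ with $K\subseteq N'$. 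Since $N'\subseteq N$, we get $K\subseteq N$, whence $N\in\mathcal{V}(K)$ and therefore $N$ lies in that finite union; as this holds for every finite union occurring in $C$, we conclude $N\in C$. Running over all closed $C\ni N'$ yields $\mathcal{V}(N')\subseteq\overline{\{N'\}}$, and hence equality.

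With the closure identified, the specialization order becomes transparent: $N\in\overline{\{N'\}}$ iff $N\in\mathcal{V}(N')$ iff $N'\subseteq N$. Thus the specialization preorder on $\mathrm{Dis}_M$ is nothing but set inclusion (reverse inclusion, depending on the orientation convention). Because ordinary inclusion is antisymmetric on $\mathrm{Sub}_M$, and a fortiori on the subset $\mathrm{Dis}_M$, the specialization preorder is a genuine partial order, and \cite{P94} then delivers that $\mathrm{Dis}_M$ is $T_0$.

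The only step that demands care — the main, if modest, obstacle — is the closure computation, since the sets $\mathcal{V}(K)$ form merely a closed \emph{subbasis} and, as observed after Lemma \ref{bpvs}(\ref{vnuvn}), are in general not stable under finite unions. One must therefore reason through arbitrary closed sets, namely intersections of finite unions of the $\mathcal{V}(K)$, rather than treating the subbasis sets themselves as exhausting the closed sets. The argument above circumvents this by extracting from each finite union that contains $N'$ a single subbasis term $\mathcal{V}(K)$ with $K\subseteq N'$, which is exactly the information that the inclusion $N'\subseteq N$ propagates to $N$.
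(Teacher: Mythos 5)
Your proof is correct and follows essentially the same route as the paper: the paper derives the proposition directly from the cited characterization in \cite{P94} (specialization preorder a partial order iff $T_0$), leaving implicit exactly what you spell out, namely that $\overline{\{N'\}}=\mathcal{V}(N')$ and hence that specialization is reverse inclusion, which is antisymmetric. Your careful handling of closed sets as intersections of finite unions of subbasis sets is the same computation the paper performs in the proof of Lemma \ref{irrc}, so nothing genuinely new or missing is at stake.
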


\begin{corollary}\cite[Theorem 3.1]{HPH21}. If $M$ is a top $R$-semimodule, then the spectrum of prime subsemimodules of $M$ endowed with Zariski topology is a $T_0$-space.
\end{corollary}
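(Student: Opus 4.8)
The plan is to obtain the statement as an immediate specialization of Proposition~\ref{ct0t1}. First I would take $\mathrm{Dis}_M$ to be the class of prime subsemimodules of $M$, which is one of the distinguished classes covered by the definition of $\mathrm{Dis}_M$. Proposition~\ref{ct0t1} then applies verbatim and yields that this space, equipped with the closed-subbasis topology $\tau_{\mathrm{Dis}_M}$, is $T_0$. So the $T_0$ conclusion is already available; the genuine content of the corollary lies in recognizing that the ambient topology in the hypothesis is the very topology to which the proposition refers.

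The only remaining point, therefore, is to identify $\tau_{\mathrm{Dis}_M}$ with the Zariski topology under the hypothesis that $M$ is a top semimodule. For this I would invoke the observation recorded just after Lemma~\ref{bpvs}: by \cite[Lemma 3.2]{HPH21}, equality holds in Lemma~\ref{bpvs}(\ref{vnuvn}) exactly when $M$ is a top semimodule. In that situation the sets $\mathcal{V}(N)$ are closed under finite unions, while Lemma~\ref{bpvs}(\ref{b}) already gives closure under arbitrary intersections; hence the family $\{\mathcal{V}(N)\}$ is itself stable under the operations defining a topology and so constitutes precisely the closed sets of $\tau_{\mathrm{Dis}_M}$. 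This is exactly the Zariski topology on the spectrum of prime subsemimodules.

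Combining the two steps, the prime spectrum of a top semimodule carrying the Zariski topology coincides, as a topological space, with the $\mathrm{Dis}_M$ of Proposition~\ref{ct0t1} for the prime class, and is thus $T_0$. I expect essentially no obstacle here beyond stating the identification of topologies carefully: the single point deserving attention is to confirm that passing from the subbasis $\{\mathcal{V}(N)\}$ to the generated topology introduces no new closed sets, which is guaranteed precisely by the closure under finite unions in the top-semimodule case together with Lemma~\ref{bpvs}(\ref{b}).
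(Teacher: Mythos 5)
Your proposal is correct and matches the paper's intent exactly: the paper states this corollary without separate proof, treating it as the specialization of Proposition~\ref{ct0t1} to the prime class, with the identification of $\tau_{\mathrm{Dis}_M}$ and the Zariski topology already recorded after Lemma~\ref{bpvs} via \cite[Lemma 3.2]{HPH21}. Your extra care in checking that the subbasis generates no new closed sets (finite unions from the top-semimodule hypothesis, arbitrary intersections from Lemma~\ref{bpvs}(\ref{b}), and $\emptyset$, $\mathrm{Dis}_M$ from Lemma~\ref{bpvs}(\ref{a})) is a sound and welcome elaboration of what the paper leaves implicit.
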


We recall a few topological concepts that we are going to use next. If $X$ is a topological space, a closed subset $S$ is called \emph{irreducible} if $S$ is not the union of two properly smaller closed subsets $S_{1}, S_{ 2}\subsetneq S.$ A point $x$ in a closed subset $S$ is called a \emph{generic point} of $S$ if $S = \mathcal{C}(\{x\}),$ the closure of $\{x\}$. A $R$-semimodule $M$ is said to be \emph{Noetherian} if every ascending chain of subsemimodules of $M$ is eventually
stationary. A topological space $X$ is said to be \emph{Noetherian} if the closed subsets of $X$ satisfy the descending chain condition. 

Suppose $M$ is a top $R$-semimodule and $\mathrm{Dis}_M$ is the class of prime subsemimodules of $M$ endowed with the Zariski topology. Then according to \cite[Theorem 3.3]{HPH21}, a nonempty subset $Y$ of $\mathrm{Dis}_M$ is irreducible if and only if  the intersection of all prime subsemimodules belonging to $Y$ is itself a prime subsemimodule of $M$. For a $R$-semimodule $M$ and for an arbitrary subsemimodule space we do not a similar characterization, however, in Lemma \ref{irrc} we identify a class of irreducible closed subsets which is good enough to characterize $T_1$ subsemimodule spaces. Moreover, this lemma is going to be fruitful in studying connectedness of subsemimodule spaces.

\begin{lemma}\label{irrc}
The subbasis closed sets of the form $\{\mathcal{V}(N)\mid N\in \mathcal{V}(N)\}$ of a subsemimodule space are irreducible. 
\end{lemma}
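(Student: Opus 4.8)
The plan is to show that whenever $N\in\mathrm{Dis}_M$—which is exactly what the indexing condition $N\in\mathcal{V}(N)$ amounts to, since $N\subseteq N$ is automatic—the set $\mathcal{V}(N)$ coincides with the closure $\mathcal{C}(\{N\})$ of the singleton $\{N\}$, and then to invoke the elementary fact that the closure of a point is always irreducible. Note first that each $\mathcal{V}(N)$ is genuinely a closed subset (it is a member of the closed subbasis), and that under this condition $\mathcal{V}(N)$ is nonempty, since it contains $N$ itself; so the notion of irreducibility from the preceding definition applies.

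The core of the argument, and the step I expect to require the most care, is identifying $\mathcal{C}(\{N\})$ with $\mathcal{V}(N)$, because $\mathcal{V}(N)$ is only a \emph{subbasis} closed set rather than a basic or arbitrary closed set. Since $\tau_{\mathrm{Dis}_M}$ is the topology generated by the closed subbasis $\{\mathcal{V}(N')\}_{N'\in\mathrm{Sub}_M}$, every closed set is an intersection of finite unions of subbasis closed sets. The key observation is that a point $L$ lies in $\mathcal{C}(\{N\})$ if and only if $L$ belongs to every \emph{subbasis} closed set containing $N$. Indeed, suppose $L\in\mathcal{V}(N')$ for every $N'$ with $N\in\mathcal{V}(N')$, and let $C$ be any closed set with $N\in C$. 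Writing $C$ as an intersection of finite unions $\bigcup_{i}\mathcal{V}(N_i)$, the membership $N\in\bigcup_{i}\mathcal{V}(N_i)$ forces $N\in\mathcal{V}(N_{i_0})$ for some index $i_0$ (this is precisely where the finite-union structure of the subbasis topology is used), whence $L\in\mathcal{V}(N_{i_0})\subseteq\bigcup_{i}\mathcal{V}(N_i)$; as this holds for every factor in the intersection, $L\in C$. Since $\mathcal{C}(\{N\})$ is the intersection of all closed sets containing $N$, this gives $L\in\mathcal{C}(\{N\})$, and the reverse implication is immediate.

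Having made this reduction, the remaining computation is direct: $N\in\mathcal{V}(N')$ means $N'\subseteq N$, and $L$ lies in $\mathcal{V}(N')$ for every such $N'$ exactly when $N\subseteq L$—one direction by taking $N'=N$, the other since $N'\subseteq N\subseteq L$ yields $L\in\mathcal{V}(N')$. Hence $\mathcal{C}(\{N\})=\mathcal{V}(N)$, so $N$ is a generic point of $\mathcal{V}(N)$. Finally, if $\mathcal{V}(N)=S_1\cup S_2$ with $S_1,S_2$ closed, then $N$ lies in $S_1$ or $S_2$, say in $S_1$; as $S_1$ is closed and contains $N$, we obtain $\mathcal{V}(N)=\mathcal{C}(\{N\})\subseteq S_1\subseteq\mathcal{V}(N)$, so $S_1=\mathcal{V}(N)$ is not properly smaller. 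Therefore $\mathcal{V}(N)$ cannot be written as a union of two properly smaller closed subsets, i.e.\ it is irreducible, completing the proof.
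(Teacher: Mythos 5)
Your proof is correct and follows essentially the same route as the paper: you identify $\mathcal{V}(N)$ with the closure $\mathcal{C}(\{N\})$ by writing an arbitrary closed set as an intersection of finite unions of subbasic sets $\mathcal{V}(N')$, using the pigeonhole observation that $N$ in such a finite union lies in some single $\mathcal{V}(N_{i_0})$ with $N_{i_0}\subseteq N$, whence $\mathcal{V}(N)\subseteq\mathcal{V}(N_{i_0})$ by monotonicity. The only difference is cosmetic: you make explicit two steps the paper leaves implicit (the characterization of the point closure via subbasic closed sets, and the standard fact that a point closure is irreducible), and you avoid the paper's unnecessary case split on $\mathcal{C}(N)=\mathrm{Dis}_M$.
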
 

\begin{proof} 
Let $\mathcal{C}(N)$ be the closure of the subsemimodule $N$. It is sufficient to show that $\mathcal{V}(N)=\mathcal{C}(N)$ for every $N\in\mathcal{V}(N)$. From the definition of  $\mathcal{C}(N)$, it is obvious that $\mathcal{C}(N)\subseteq \mathcal{V}(N)$. 
To have the other inclusion, consider the case: $\mathcal{C}(N)= \mathrm{Dis}_M$. Then  from 
$
\mathrm{Dis}_M=\mathcal{C}(N)\subseteq \mathcal{V}(N)\subseteq \mathrm{Dis}_M,
$
it follows that $\mathcal{V}(N)=\mathcal{C}(N)$. 

Let $\mathcal{C}(N)\neq \mathrm{Dis}_M$. Since $\mathcal{C}(N)$ is a closed set, there exists an $\Lambda$ such that  for each $\lambda\in\Lambda$, there is a positive integer $n_{\lambda}$ and subsemimodules $N_{\lambda 1},\dots, N_{\lambda n_\lambda}$ of $M$ such that 
\[
\mathcal{C}(N)={\bigcap_{\lambda\in\Lambda}}\left({\bigcup_{ i\,=1}^{ n_\lambda}}\mathcal{V}(N_{\lambda i})\right).
\]
Since
$\mathcal{C}(N)\neq \mathrm{Dis}_M,$  for each $\lambda$, ${\bigcup_{ i\,=1}^{ n_\lambda}}\mathcal{V}(N_{\lambda i})\neq \emptyset$. Hence, for each $\lambda$, $N\in   {\bigcup_{ i\,=1}^{ n_\lambda}}\mathcal{V}(N_{\lambda i})$, and that implies \[\mathcal{V}(N)\subseteq {\bigcup_{ i=1}^{ n_\lambda}}\mathcal{V}(N_{\lambda i}),\] which proves the inclusion.	
\end{proof}     

\begin{corollary}\label{spiir}
All non-empty subbasis closed subset of proper subsemimodule spaces are irreducible.
\end{corollary}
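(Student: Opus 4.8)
The plan is to reduce the general non-empty subbasis closed set $\mathcal{V}(N)$ to the irreducible sets already produced in Lemma~\ref{irrc}. Recall that Lemma~\ref{irrc} guarantees irreducibility of $\mathcal{V}(N)$ only under the hypothesis $N\in\mathcal{V}(N)$, that is, only when the defining subsemimodule $N$ itself belongs to the class $\mathrm{Dis}_M$. For an arbitrary $N\in\mathrm{Sub}_M$ this need not hold, so the heart of the argument is to replace $N$ by a canonically associated subsemimodule that lies in $\mathrm{Dis}_M$ and yet determines the same subbasis closed set.

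The natural candidate is $N^{\omega}=\bigcap\{L\mid L\in\mathcal{V}(N)\}$ from Lemma~\ref{bpvs}(\ref{axaa}). First I would invoke Lemma~\ref{bpvs}(\ref{hahxa}) to get $\mathcal{V}(N)=\mathcal{V}(N^{\omega})$, so it suffices to prove that $\mathcal{V}(N^{\omega})$ is irreducible. By Lemma~\ref{irrc} this follows at once provided $N^{\omega}\in\mathcal{V}(N^{\omega})$, i.e.\ provided $N^{\omega}\in\mathrm{Dis}_M$. Since here $\mathrm{Dis}_M$ is the class of \emph{all} proper subsemimodules of $M$, I only need to check two things: that $N^{\omega}$ is a subsemimodule, and that it is proper. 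The first is immediate, because $N^{\omega}$ is an intersection of the subsemimodules $L\in\mathcal{V}(N)$, and this intersection is taken over a non-empty family precisely because $\mathcal{V}(N)\neq\emptyset$.

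The main point — and the only place where the hypothesis \emph{proper} is used — is the second check. Since $\mathcal{V}(N)$ is non-empty, there is at least one $L_0\in\mathcal{V}(N)$; as $L_0$ is a proper subsemimodule we have $N^{\omega}\subseteq L_0\subsetneq M$, whence $N^{\omega}\neq M$ and $N^{\omega}$ is proper. Thus $N^{\omega}\in\mathrm{Dis}_M$, so $N^{\omega}\in\mathcal{V}(N^{\omega})$, and Lemma~\ref{irrc} yields that $\mathcal{V}(N^{\omega})=\mathcal{V}(N)$ is irreducible. I expect no genuine obstacle beyond this non-emptiness bookkeeping: the entire content is that for the proper subsemimodule space every $\mathcal{V}(N^{\omega})$ is automatically of the form covered by Lemma~\ref{irrc}, a closure property that can fail for a general $\mathrm{Dis}_M$ exactly because there $N^{\omega}$ may escape the class.
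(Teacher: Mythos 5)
Your argument is correct, but it takes a detour that the paper does not need: the corollary is stated as an immediate consequence of Lemma \ref{irrc}, and the intended one-line proof is that non-emptiness of $\mathcal{V}(N)$ already forces $N$ itself into the class. Indeed, if $L_0\in\mathcal{V}(N)$ then $N\subseteq L_0\subsetneq M$, so $N$ is a proper subsemimodule, hence $N\in\mathrm{Dis}_M$ and (as $N\subseteq N$) $N\in\mathcal{V}(N)$; Lemma \ref{irrc} then applies to $\mathcal{V}(N)$ directly, with no passage to $N^{\omega}$. Your premise that ``for an arbitrary $N\in\mathrm{Sub}_M$ the hypothesis $N\in\mathcal{V}(N)$ need not hold'' is true for a general $\mathrm{Dis}_M$, but it never fails in the proper subsemimodule space when $\mathcal{V}(N)\neq\emptyset$, because properness is inherited downward along inclusion. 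That said, your route is sound at every step: the family defining $N^{\omega}$ is non-empty precisely because $\mathcal{V}(N)\neq\emptyset$, the intersection is a subsemimodule, $N^{\omega}\subseteq L_0\subsetneq M$ gives properness, Lemma \ref{bpvs}(\ref{hahxa}) gives $\mathcal{V}(N)=\mathcal{V}(N^{\omega})$, and Lemma \ref{irrc} finishes. What your version buys is generality: it shows that for any class $\mathrm{Dis}_M$ stable under the operation $N\mapsto N^{\omega}$ (i.e., $N^{\omega}\in\mathrm{Dis}_M$ whenever $\mathcal{V}(N)\neq\emptyset$), every non-empty subbasis closed set is irreducible, indeed with generic point $N^{\omega}$ --- exactly the mechanism behind Theorem \ref{sob}. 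The paper's implicit argument instead exploits the specific downward-closure of the proper class and is the shorter of the two.
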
 

It has been shown in \cite[Theorem 3.2]{HPH21} that in a top semimodule, the class of prime subsemimodules endowed with Zariski topology is a $T_1$-space if and only if every prime subsemimodule is maximal in the class of prime subsemimodules. Theorem \ref{t1} generalizes this result to all $\mathrm{Dis}_M$ spaces.

\begin{theorem}\label{t1}
A $\mathrm{Dis}_M$ is a $T_1$-space if and only if it is contained in the class of maximal subsemimodule space. 
\end{theorem}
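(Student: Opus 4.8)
The plan is to reduce the $T_1$ property to a purely order-theoretic statement about $(\mathrm{Dis}_M,\subseteq)$ by computing the closures of singletons. Recall that a topological space is $T_1$ exactly when every singleton is closed. Each point of $\mathrm{Dis}_M$ is a subsemimodule $N$ satisfying $N\in\mathcal{V}(N)$, so the proof of Lemma \ref{irrc} applies verbatim and gives $\mathcal{C}(N)=\mathcal{V}(N)$; that is, the closure of the point $N$ is precisely the subbasis closed set $\mathcal{V}(N)$. Consequently $\mathrm{Dis}_M$ is $T_1$ if and only if $\mathcal{V}(N)=\{N\}$ for every $N\in\mathrm{Dis}_M$. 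Since $\mathcal{V}(N)=\{L\in\mathrm{Dis}_M\mid N\subseteq L\}$, the equality $\mathcal{V}(N)=\{N\}$ means that no member of $\mathrm{Dis}_M$ contains $N$ properly, i.e. that $N$ is a maximal element of the class $\mathrm{Dis}_M$. This is the exact analogue of the criterion of \cite[Theorem 3.2]{HPH21}, where ``maximal in the class of prime subsemimodules'' plays the role of our ``maximal in $\mathrm{Dis}_M$''.

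With this reduction in hand, the easy half is the implication from maximality to $T_1$. Assuming every $N\in\mathrm{Dis}_M$ is a maximal subsemimodule of $M$, I would take an arbitrary $L\in\mathcal{V}(N)$; then $L$ is a proper subsemimodule with $N\subseteq L$, so maximality of $N$ forces $L=N$, whence $\mathcal{V}(N)=\{N\}$ and every singleton is closed. This uses nothing beyond the definition of a maximal subsemimodule together with the closure identity above, so it is routine.

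The delicate half is the converse, and it is here that the precise meaning of ``contained in the maximal subsemimodule space'' must be pinned down. The $T_1$ hypothesis only delivers $\mathcal{V}(N)=\{N\}$, i.e. that $N$ is maximal among the members of $\mathrm{Dis}_M$; to upgrade this to genuine maximality among all proper subsemimodules of $M$ one must rule out the existence of a proper subsemimodule $P$ with $N\subsetneq P\subsetneq M$ lying outside $\mathrm{Dis}_M$. I expect this to be the main obstacle. The clean way around it is to invoke that $\mathrm{Dis}_M$ contains every maximal subsemimodule of $M$ (Lemma \ref{ncc}, available whenever property (\ref{scc}) holds, in particular when $M$ is finitely generated): then any maximal subsemimodule $Q$ with $N\subseteq Q$ would itself lie in $\mathcal{V}(N)$, forcing $Q=N$ and hence $N$ maximal in $M$. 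Absent such a hypothesis, the statement is best read with ``maximal'' meaning ``maximal element of $\mathrm{Dis}_M$,'' and then the converse is immediate from the displayed equivalence.
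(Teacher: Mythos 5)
Your proposal follows the same route as the paper: both hinge on Lemma \ref{irrc} to identify the closure of a point $N\in\mathrm{Dis}_M$ with the subbasis closed set $\mathcal{V}(N)$, and then translate the $T_1$ property into the condition $\mathcal{V}(N)=\{N\}$; your easy half (maximality in $M$ plus $M\notin\mathrm{Dis}_M$ forces $\mathcal{V}(N)=\{N\}$) is exactly the paper's converse direction. Where you genuinely add value is in the ``delicate half.'' The paper's forward argument takes $L\in\mathrm{Dis}_M$, picks a maximal subsemimodule $N\supseteq L$, and asserts $N\in\mathcal{V}(L)$ --- but $\mathcal{V}(L)$ consists by definition only of members of $\mathrm{Dis}_M$ containing $L$, so this step silently assumes that maximal subsemimodules belong to $\mathrm{Dis}_M$. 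That is precisely the gap you isolate: the $T_1$ hypothesis alone only yields that each $N\in\mathrm{Dis}_M$ is maximal \emph{within} $\mathrm{Dis}_M$, and your patch via Lemma \ref{ncc} (under property (\ref{scc}), e.g.\ when $M$ is finitely generated) is the correct way to upgrade this to maximality among all proper subsemimodules --- or else, as you say, the statement should be read with ``maximal'' meaning maximal element of $\mathrm{Dis}_M$, matching the formulation of \cite[Theorem 3.2]{HPH21}, which speaks of maximality in the class of prime subsemimodules rather than in $M$.

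One small point you share with the paper rather than improve on: both arguments also tacitly assume that every proper subsemimodule above a given $L$ (or at least $L$ itself) is dominated by a \emph{maximal} subsemimodule, which requires a Zorn-type argument and again holds when $M$ is finitely generated but not in general; since you already condition on finite generation for Lemma \ref{ncc}, your version is covered, but it would be worth stating this existence hypothesis explicitly alongside the contraction to property (\ref{scc}).
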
  

\begin{proof} 
If $L\in\mathrm{Dis}_M$, then obviously $L\in\mathcal{V}(L)$, and so, by Lemma \ref{irrc}, $\mathcal{C}(L)=\mathcal{V}(L)$. Let $N$ be a maximal subsemimodule of $M$ with $L\subseteq N$. If $\mathrm{Dis}_M$ is a $T_1$-space, then   \[N\in 	\mathcal{V}(L)=\mathcal{C}(L) = \{L\}.\] Therefore, $N=L$.  
Conversely,  $\mathcal{V}(N)=\{N\}$ for every maximal subsemimodule $N$ of $M$ so that $N\in \mathcal{V}(N)$, and hence, by Lemma \ref{irrc}, $\mathcal{C}(N)=\{N\}$. This proves that the class of maximal subsemimodules of $M$ is a $T_1$-space.
\end{proof} 

\begin{corollary}
If $M$  is a Noetherian semimodule and if $\mathrm{Dis}_M$ is a discrete space, then $M$ is Artinian.  
\end{corollary}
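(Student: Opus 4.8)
The plan is to feed the two structural theorems already proved into the elementary topological fact that a compact discrete space is finite, and then to convert this finiteness into the descending chain condition on subsemimodules. First I would record two reductions. Since a discrete space is in particular $T_1$, Theorem \ref{t1} applies and shows that every member of $\mathrm{Dis}_M$ is a maximal subsemimodule of $M$; equivalently no two distinct elements of $\mathrm{Dis}_M$ are comparable under inclusion, so the specialization order is trivial. Next, because $M$ is Noetherian it is finitely generated: otherwise one could choose $m_1,m_2,\dots$ with $m_{i+1}\notin\langle m_1,\dots,m_i\rangle$ and obtain a strictly ascending chain $\langle m_1\rangle\subsetneq\langle m_1,m_2\rangle\subsetneq\cdots$, contradicting the ascending chain condition. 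Theorem \ref{comp} then guarantees that $\mathrm{Dis}_M$ is compact, and a compact discrete space is finite, so we may write $\mathrm{Dis}_M=\{N_1,\dots,N_k\}$ with each $N_i$ maximal.

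It remains to deduce that $M$ is Artinian, and this is the substantive part. The topology alone does not suffice: by Lemma \ref{bpvs}(\ref{xbxa}) together with monotonicity, a descending chain of subsemimodules $L_1\supseteq L_2\supseteq\cdots$ induces an ascending chain $\mathcal{V}(L_1)\subseteq\mathcal{V}(L_2)\subseteq\cdots$ in the finite set $\mathrm{Dis}_M$, which necessarily stabilizes, but by the same equivalence this only forces the $\omega$-closures $L_i^{\omega}$ to stabilize, not the $L_i$ themselves. To bridge this gap I would pass through finite length: finiteness of $\mathrm{Dis}_M$ together with the $T_1$ reduction says that every prime subsemimodule recorded by $\mathrm{Dis}_M$ is maximal, i.e. $M$ has ``dimension zero''. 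I would then build a finite filtration $0=M_0\subseteq M_1\subseteq\cdots\subseteq M_n=M$ by subsemimodules whose successive quotients are simple and governed by these finitely many maximal subsemimodules; a Noetherian semimodule admitting such a filtration has finite length, and finite length yields the descending chain condition, so $M$ is Artinian.

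The hard part will be exactly this last algebraic step — the semimodule analogue of the Akizuki--Hopkins theorem that a Noetherian semimodule of dimension zero is Artinian — precisely because, as the chain argument above shows, discreteness of $\mathrm{Dis}_M$ transfers cleanly to the $\omega$-closed subsemimodules but not automatically to all subsemimodules, and because constructing the prime filtration in the semimodule setting is itself nontrivial. I would also be mindful of which class $\mathrm{Dis}_M$ is: the reduction to dimension zero has genuine content only when $\mathrm{Dis}_M$ controls the prime subsemimodules, so I would carry out and state the argument for the prime subsemimodule space, where the filtration has teeth, rather than for a class such as the maximal subsemimodules, for which the conclusion ``all members maximal'' is vacuous.
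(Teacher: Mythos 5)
Your topological reductions are sound, and they are exactly what the paper's placement of this corollary (immediately after Theorem \ref{t1}, with no proof supplied) suggests as the intended skeleton: discreteness gives $T_1$, Theorem \ref{t1} makes every member of $\mathrm{Dis}_M$ maximal, Noetherianness gives finite generation, Theorem \ref{comp} gives compactness, and a compact discrete space is finite. Your remark that a descending chain $L_1\supseteq L_2\supseteq\cdots$ only forces the closures $L_i^{\omega}$ to stabilize, by Lemma \ref{bpvs}(\ref{xbxa}), is also correct, and it correctly diagnoses why the topology alone cannot finish the argument.

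But the proof is not complete: everything of substance is deferred to the final step, a semimodule analogue of the Akizuki--Hopkins theorem, which you announce rather than prove, and which does not import from modules for free. Bourne-style quotients do not give a bijection between subsemimodules containing $N$ and subsemimodules of $M/N$; the lattice $\mathrm{Sub}_M$ need not be modular, so Jordan--H\"{o}lder/length arguments are unjustified; prime filtrations via associated primes require Noetherian hypotheses on the $R$-side that are not assumed (only $M$ is Noetherian); and the paper's notion of prime subsemimodule is a condition inside $M$, not a prime ideal of $R$, so even stating the filtration with ``successive quotients governed by these maximal subsemimodules'' needs justification. Worse, your closing hedge --- carrying out the argument only for the prime subsemimodule space --- silently changes the statement, which quantifies over every $\mathrm{Dis}_M$; and as a uniform claim the corollary cannot be proved along your lines, because for the maximal class it fails outright: take $R=M=\mathds{Z}_{(p)}$ (a ring is a semiring, and the subsemimodules of a module are exactly its submodules) with $\mathrm{Dis}_M$ the class of maximal subsemimodules; this space is the singleton $\{p\mathds{Z}_{(p)}\}$, hence discrete, yet $M$ is Noetherian and not Artinian. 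So your outline can at best yield a corrected statement for a specified class --- for the proper subsemimodule space $T_1$ alone already forces every proper subsemimodule to be maximal, making $M$ simple and the conclusion trivial without Noetherianness, while for the prime space one needs precisely the missing dimension-zero theorem --- and in its present form it neither closes the key algebraic gap nor establishes the corollary as written.
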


Determining whether every nonempty irreducible closed set of a topological space has a unique generic point is a fundamental question in topology. It has been proved in \cite[Corollary 3.1]{HPH21} that the class of prime subsemimodules of a top semimodule endowed with Zariski topology has this property. In Theorem \ref{sob} we characterize such subsemimodule spaces.

\begin{theorem}\label{sob}  
Let $\mathrm{Dis}_M$ be a subsemimodule space. Then every non-empty irreducible subbasis closed set $\mathcal{V}(N)$ of $\mathrm{Dis}_M$ has a unique generic point if and only if $\mathcal{V}(N)$ contains $N^{\omega}.$ 
\end{theorem}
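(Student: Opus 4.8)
The plan is to translate the notion of a generic point into an equality of subbasis closed sets via Lemma~\ref{irrc}, and then read off both implications from the closure operator $N\mapsto N^{\omega}$ together with the elementary identities of Lemma~\ref{bpvs}. First I would record the key consequence of Lemma~\ref{irrc}: for any $L\in\mathrm{Dis}_M$ one has $L\in\mathcal{V}(L)$, so that lemma yields $\mathcal{C}(\{L\})=\mathcal{V}(L)$. Consequently a point $L\in\mathcal{V}(N)$ is a generic point of $\mathcal{V}(N)$ precisely when $\mathcal{V}(L)=\mathcal{V}(N)$. By Lemma~\ref{bpvs}(\ref{xbxa}) this equality is equivalent to $L^{\omega}=N^{\omega}$, and since $L\in\mathrm{Dis}_M$, Lemma~\ref{bpvs}(\ref{axa}) collapses it to $L=N^{\omega}$. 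Thus the whole problem reduces to the assertion that $\mathcal{V}(N)$ has a generic point if and only if $N^{\omega}$ itself lies in $\mathrm{Dis}_M$ (equivalently $N^{\omega}\in\mathcal{V}(N)$, using $N\subseteq N^{\omega}$ from Lemma~\ref{bpvs}(\ref{axaa})), in which case the generic point must equal $N^{\omega}$.

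For the sufficiency direction I would simply exhibit $N^{\omega}$ as the generic point. Assuming $N^{\omega}\in\mathcal{V}(N)$, i.e. $N^{\omega}\in\mathrm{Dis}_M$, Lemma~\ref{irrc} applied to $N^{\omega}$ gives $\mathcal{C}(\{N^{\omega}\})=\mathcal{V}(N^{\omega})$, while Lemma~\ref{bpvs}(\ref{hahxa}) identifies $\mathcal{V}(N^{\omega})=\mathcal{V}(N)$; combining these shows $\mathcal{C}(\{N^{\omega}\})=\mathcal{V}(N)$, so $N^{\omega}$ is a generic point. For the necessity direction I would take a generic point $L$ of $\mathcal{V}(N)$, so that $L\in\mathrm{Dis}_M$ with $\mathcal{C}(\{L\})=\mathcal{V}(N)$, invoke the reduction above to obtain $L=N^{\omega}$, and conclude $N^{\omega}=L\in\mathrm{Dis}_M$, that is, $N^{\omega}\in\mathcal{V}(N)$.

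Uniqueness then comes for free: since $\mathrm{Dis}_M$ is $T_0$ by Proposition~\ref{ct0t1}, two points with equal closures coincide, so a generic point is unique whenever it exists. I would also note in passing that the irreducibility hypothesis on $\mathcal{V}(N)$ is automatically consistent with the statement, since the condition $N^{\omega}\in\mathcal{V}(N)$ forces $\mathcal{V}(N)=\mathcal{C}(\{N^{\omega}\})$ to be the closure of a point, hence irreducible.

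The proof is essentially a chain of substitutions, and the only step demanding genuine care—and the likeliest place to slip—is the bookkeeping between the topological closure $\mathcal{C}$ and the subbasis closed set $\mathcal{V}$. One must apply Lemma~\ref{irrc} strictly in the form $\mathcal{C}(\{L\})=\mathcal{V}(L)$, whose validity hinges on $L\in\mathrm{Dis}_M$, rather than silently treating $\mathcal{V}(N)$ as a genuine closed set or assuming every $\mathcal{V}(N)$ is the closure of a point. Once the identity $\mathcal{C}(\{L\})=\mathcal{V}(L)$ is pinned to membership in $\mathrm{Dis}_M$, the equivalences of Lemma~\ref{bpvs}(\ref{axa}), (\ref{hahxa}), (\ref{xbxa}) close the argument with no further work.
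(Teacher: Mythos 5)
Your proof is correct, and its forward direction coincides with the paper's: a generic point $L$ of $\mathcal{V}(N)$ satisfies $\mathcal{V}(N)=\mathcal{C}(\{L\})=\mathcal{V}(L)$ by Lemma~\ref{irrc}, whence $L=N^{\omega}$ by Lemma~\ref{bpvs}(\ref{xbxa}) and (\ref{axa}). Where you genuinely diverge is the converse. You argue pointwise: if $N^{\omega}\in\mathcal{V}(N)$, then Lemma~\ref{irrc} applied to $N^{\omega}$ together with Lemma~\ref{bpvs}(\ref{hahxa}) gives $\mathcal{C}(\{N^{\omega}\})=\mathcal{V}(N^{\omega})=\mathcal{V}(N)$, so $N^{\omega}$ is a generic point, unique either by the $T_0$ property of Proposition~\ref{ct0t1} or by your own reduction showing any generic point must equal $N^{\omega}$. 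This is shorter than the paper's converse and, as you note, never actually uses irreducibility of $\mathcal{V}(N)$: you prove the sharper per-set equivalence that a nonempty subbasis closed set $\mathcal{V}(N)$ has a generic point if and only if $N^{\omega}\in\mathrm{Dis}_M$, in which case irreducibility is automatic since the set is the closure of a point. The paper's converse is deliberately ``more involved'': it takes an arbitrary irreducible closed set $K=\bigcap_{i}\bigl(\bigcup_{\alpha}\mathcal{V}(N_{i\alpha})\bigr)$, uses irreducibility of $K$ to replace each finite union by a single $\mathcal{V}(N_i)$, collapses $K=\bigcap_i\mathcal{V}(N_i)=\mathcal{V}\bigl(\sum_i N_i\bigr)$ via Lemma~\ref{bpvs}(\ref{b}), and concludes $K=\mathcal{C}\bigl(\bigl(\sum_i N_i\bigr)^{\omega}\bigr)$. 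That extra work buys full sobriety --- generic points for \emph{all} irreducible closed sets, not only subbasis ones --- which is precisely what the Remark following the theorem appeals to; your argument settles the theorem as literally stated, but would need the paper's decomposition step appended to justify that remark.
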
 

\begin{proof}
Let $\mathcal{V}(N)$ is a non-empty irreducible subbasis closed subset of a subsemimodule space $\mathrm{Dis}_M$ such that $\mathcal{V}(N)$ has a unique generic point, which implies that 
\[\mathcal{V}(N)=\mathcal{C}(L)=\mathcal{V}(L).\] Then by Lemma \ref{bpvs}(\ref{axa}) we have $L=N^{\omega}\in \mathrm{Dis}_M$. 
The proof of the converse part is more involved. Assume that every non-empty irreducible subbasis  closed set $\mathcal{V}(N)$ contains $N^{\omega}$ and let $K$ be an irreducible closed subset of $\mathrm{Dis}_M$. By definition, \[K=\bigcap_{i\in I}\left( \bigcup_{\alpha=1}^{n_i} \mathcal{V}(N_{i\alpha})\right),\] for some  subsemimodules $N_{ij}$ of $M$. Since $K$ is irreducible, for every $i\in I$ there exists a subsemimodule $N_i$ of $M$ such that $K\subseteq \mathcal{V}(N_i)\subseteq \bigcup_{\alpha=1}^{n_i} \mathcal{V}(N_{i\alpha})$ and thus we have  $$K=\bigcap_{i\in I}\mathcal{V}(N_i)=\mathcal{V}\left(\sum_{i\in I}N_i \right)=\mathcal{V}\left(\sum_{i\in I}N_i \right)^{\omega}.$$ Since $\left(\sum_{i\in I}N_i \right)^{\omega}\in \mathrm{Dis}_M$, we thus have $K=\mathcal{C}\left(\sum_{i\in I}N_i \right)^{\omega}$.  
\end{proof}

\begin{corollary}
Every nonempty irreducible closed subset of a proper, prime, minimal prime subsemimodule spaces has a unique generic point.
\end{corollary}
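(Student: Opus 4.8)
The plan is to read the corollary off Theorem~\ref{sob}. By that characterization, for each of the three classes it suffices to verify that every nonempty irreducible subbasis closed set $\mathcal{V}(N)$ satisfies $N^{\omega}\in\mathcal{V}(N)$; since $N\subseteq N^{\omega}$ always holds by Lemma~\ref{bpvs}(\ref{axaa}), this amounts to checking that the intersection $N^{\omega}=\bigcap\{L\mid L\in\mathcal{V}(N)\}$ again lies in $\mathrm{Dis}_M$. Thus the entire argument reduces to a single membership question, treated separately for the proper, prime, and minimal prime classes.

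For the proper subsemimodule space the verification is immediate, and here the irreducibility hypothesis costs nothing: by Corollary~\ref{spiir} every nonempty subbasis closed set is already irreducible. If $\mathcal{V}(N)\neq\emptyset$, fix $L_{0}\in\mathcal{V}(N)$; then $N^{\omega}$, being an intersection of subsemimodules, is a subsemimodule, and $N^{\omega}\subseteq L_{0}\subsetneq M$, so $N^{\omega}$ is proper. Hence $N^{\omega}\in\mathrm{Dis}_M$ and the hypothesis of Theorem~\ref{sob} is satisfied.

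For the prime subsemimodule space one has $N^{\omega}=\sqrt{N}$ by the definition of the radical, and the crux is to show that irreducibility of $\mathcal{V}(N)=\mathcal{V}(\sqrt{N})$ (the equality coming from the note following Lemma~\ref{bpvs}) forces $\sqrt{N}$ to be prime. I would argue by contraposition: if $\sqrt{N}$ is not prime, choose $r\in R$ and $m\in M$ with $rm\in\sqrt{N}$ but $m\notin\sqrt{N}$ and $rM\not\subseteq\sqrt{N}$, and then establish
\[
\mathcal{V}(\sqrt{N})=\mathcal{V}\bigl(\sqrt{N}+\langle m\rangle\bigr)\cup\mathcal{V}\bigl(\sqrt{N}+rM\bigr),
\]
where the inclusion $\subseteq$ uses the defining property of a prime subsemimodule applied to each $P\in\mathcal{V}(\sqrt{N})$ (so $m\in P$ or $rM\subseteq P$), while the reverse inclusion follows from Lemma~\ref{bpvs}(1). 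The choices of $m$ and $r$ supply primes over $N$ showing that each of the two pieces is strictly smaller than $\mathcal{V}(\sqrt{N})$, so $\mathcal{V}(\sqrt{N})$ would be reducible, a contradiction. Therefore $\sqrt{N}=N^{\omega}$ is prime and belongs to $\mathrm{Dis}_M$.

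The minimal prime case I would reduce to the prime one, and this is where I expect the main obstacle. Writing $N^{\omega}$ as the intersection of the minimal primes lying over $N$ and using that every prime over $N$ contains such a minimal prime, one identifies $N^{\omega}$ with $\sqrt{N}$; the decomposition argument of the previous paragraph then shows $\sqrt{N}$ is prime, and a prime that equals the intersection of the minimal primes above $N$ must itself be the unique such minimal prime, whence $N^{\omega}\in\mathrm{Dis}_M$. The delicate points, and the reason this step is the genuine obstacle, are the passage from topological irreducibility to algebraic primality of the intersection \emph{without} the top-semimodule hypothesis used in \cite{HPH21}, together with a Zorn's lemma argument guaranteeing that minimal primes over $N$ exist, so that the identification $N^{\omega}=\sqrt{N}$ is legitimate. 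Once these are secured, Theorem~\ref{sob} applies uniformly to all three classes and delivers a unique generic point for every nonempty irreducible closed subset.
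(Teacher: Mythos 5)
Your overall strategy---reading the corollary off Theorem~\ref{sob} by verifying, class by class, that a nonempty irreducible $\mathcal{V}(N)$ contains $N^{\omega}$---is exactly the paper's intended route (the paper offers no separate proof beyond Theorem~\ref{sob}). Your proper case is correct as stated. Your prime case is also sound: finite unions of subbasis closed sets are closed in this topology, $\sqrt{N}$ is proper because $\mathcal{V}(N)\neq\emptyset$, and your strictness witnesses exist precisely because, for the prime class, $N^{\omega}=\sqrt{N}$ is the intersection of the \emph{points} of $\mathcal{V}(N)$, so $m\notin\sqrt{N}$ and $rM\not\subseteq\sqrt{N}$ each produce a point of the space lying outside one of the two pieces.

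The genuine gap is in the minimal prime case, and it lies elsewhere than where you expect. For that class, $N^{\omega}=\bigcap\{L\mid L\in\mathcal{V}(N)\}$ is the intersection of the \emph{minimal} prime subsemimodules of $M$ containing $N$, not of the primes minimal over $N$; the identification $N^{\omega}=\sqrt{N}$ is false in general, and your decomposition then breaks at the strictness step: $m\notin\sqrt{N}$ yields a prime over $N$ avoiding $m$, but that prime need not be a point of $\mathrm{Dis}_M$, so it cannot witness that $\mathcal{V}\bigl(\sqrt{N}+\langle m\rangle\bigr)$ is properly smaller inside this space. Moreover, even granting that $\sqrt{N}$ is prime and is the unique prime minimal over $N$, it still need not belong to $\mathrm{Dis}_M$. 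The repair is simpler than your Zorn route and needs no minimal primes over $N$ at all: run your own decomposition on $N^{\omega}$ instead of $\sqrt{N}$, using Lemma~\ref{bpvs}(\ref{hahxa}) to write $\mathcal{V}(N)=\mathcal{V}(N^{\omega})$. Now $m\notin N^{\omega}$ and $rM\not\subseteq N^{\omega}$ supply witnesses \emph{inside} $\mathcal{V}(N)$ by the very definition of $N^{\omega}$, so irreducibility forces $N^{\omega}$ to be prime; and since $N^{\omega}\subseteq L$ for every $L\in\mathcal{V}(N)$ with each such $L$ minimal among primes, minimality gives $N^{\omega}=L\in\mathrm{Dis}_M$ (in fact $\mathcal{V}(N)$ is then a singleton). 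This uniform argument also covers the prime class. One caveat: this works when ``minimal prime'' means minimal in the poset of prime subsemimodules; under the paper's literal definition (a minimal, i.e.\ atomic, subsemimodule that is prime) the statement itself is in doubt: for $M=k^{2}$ over an infinite field $k$ the minimal primes are the lines, every closed set is finite or all of $\mathrm{Dis}_M$, and $\mathcal{V}(0)=\mathrm{Dis}_M$ is irreducible with no generic point, since $0^{\omega}=0\notin\mathrm{Dis}_M$ and closures of singletons are singletons.
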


\begin{corollary}
Let $M$ be a top $R$-semimodule. Then every nonempty irreducible closed subset (with respect to Zariski topology) of a prime and minimal prime subsemimodule spaces has a unique generic point.	
\end{corollary}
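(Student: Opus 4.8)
The plan is to obtain this statement as a specialization of the preceding corollary, so that the only genuinely topological input is the coincidence of the two topologies in the top case. First I would recall from \cite[Lemma 3.2]{HPH21} (and the remark following it above) that, because $M$ is a top $R$-semimodule, equality holds in Lemma \ref{bpvs}(\ref{vnuvn}); hence the subbasis closed sets $\mathcal{V}(N)$ are stable under finite unions and, by Lemma \ref{bpvs}(\ref{b}), under arbitrary intersections, so they are exactly the closed sets of the Zariski topology. In particular the Zariski topology agrees with $\tau_{\mathrm{Dis}_M}$ on both the prime and the minimal prime subsemimodule spaces, every nonempty Zariski-closed set has the form $\mathcal{V}(N)$ for some $N\in\mathrm{Sub}_M$, and the Zariski notion of irreducibility coincides with the one used in Theorem \ref{sob}. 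The statement therefore reduces to verifying, for each of the two classes, the hypothesis $N^{\omega}\in\mathcal{V}(N)$ of Theorem \ref{sob} for every nonempty irreducible $\mathcal{V}(N)$.

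For the prime subsemimodule space I would note that $N^{\omega}=\bigcap\{L\mid L\in\mathcal{V}(N)\}$ is precisely the intersection of all prime subsemimodules containing $N$, that is, the radical $\sqrt{N}$. Since $\mathcal{V}(N)$ is irreducible, the irreducibility criterion of \cite[Theorem 3.3]{HPH21} applies and shows that this intersection is itself prime; thus $N^{\omega}\in\mathrm{Dis}_M$, and $N\subseteq N^{\omega}$ by Lemma \ref{bpvs}(\ref{axaa}) gives $N^{\omega}\in\mathcal{V}(N)$. Theorem \ref{sob} then yields the unique generic point $\mathcal{C}(N^{\omega})=\mathcal{V}(N)$, uniqueness being automatic from the $T_0$ property of Proposition \ref{ct0t1}. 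For the minimal prime subsemimodule space the situation is even simpler: if $L$ and $L'$ are minimal prime subsemimodules with $L\subseteq L'$, then minimality of $L'$ forces $L=L'$, so that $\mathcal{V}(L)=\{L\}$ for every minimal prime $L$; since $L\in\mathcal{V}(L)$, Lemma \ref{irrc} gives $\mathcal{C}(L)=\mathcal{V}(L)=\{L\}$, whence every singleton is closed, every nonempty irreducible closed set is a single point, and such a point is trivially its own unique generic point.

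I expect the decisive step to be the prime case, and specifically the passage identifying $N^{\omega}$ with the intersection of the prime subsemimodules lying in $\mathcal{V}(N)$ and invoking \cite[Theorem 3.3]{HPH21} to conclude that this intersection is again prime; everything else is bookkeeping around the identification of the Zariski topology with $\tau_{\mathrm{Dis}_M}$ and an appeal to Theorem \ref{sob}. One small point to keep honest is that Theorem \ref{sob} is phrased only for subbasis closed sets $\mathcal{V}(N)$, so the observation in the first paragraph that in the top case every Zariski-closed set already has this form is exactly what licenses the reduction.
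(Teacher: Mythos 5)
Your overall strategy---identifying the Zariski topology with $\tau_{\mathrm{Dis}_M}$ in the top case via \cite[Lemma 3.2]{HPH21}, noting that then every closed set is of the form $\mathcal{V}(N)$, and feeding the two classes into Theorem \ref{sob}---is exactly the route the paper intends: the corollary is stated there without proof, as a direct specialization of the preceding corollary through the coincidence of the two topologies. Your prime case is sound: $N^{\omega}$ is the intersection of the primes containing $N$, i.e.\ $\sqrt{N}$; \cite[Theorem 3.3]{HPH21}, applied to the nonempty irreducible set $\mathcal{V}(N)$, shows this intersection is prime, so $N^{\omega}\in\mathcal{V}(N)$ by Lemma \ref{bpvs}(\ref{axaa}), and uniqueness follows from Proposition \ref{ct0t1}.

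The minimal prime branch, however, contains a genuine gap: from ``every singleton is closed'' you infer ``every nonempty irreducible closed set is a single point,'' and $T_1$ simply does not imply this---the cofinite topology on an infinite set is $T_1$ while the whole space is an irreducible closed set with no generic point. This is not a hypothetical worry in the present setting. With the paper's definitions a minimal prime subsemimodule is in particular a \emph{minimal} (hence nonzero) subsemimodule, so two distinct minimal primes intersect in $0$; consequently every subbasis closed set $\mathcal{V}(N)$ with $N\neq 0$ contains at most one point, and the closed sets of the minimal prime space are exactly the finite subsets together with $\mathcal{V}(0)=\mathrm{Dis}_M$. When the space is infinite this is precisely the cofinite situation: $\mathcal{V}(0)$ is irreducible, yet $0^{\omega}=\bigcap\{L\in\mathrm{Dis}_M\}=0\notin\mathrm{Dis}_M$, so the criterion of Theorem \ref{sob} is violated and your argument cannot close. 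To finish along your lines you must separately dispose of the closed set $\mathcal{V}(0)$---show it is a singleton or reducible, which in effect restricts the minimal prime space to be finite or requires some property of top semimodules ruling out the infinite case---rather than deducing the conclusion from the $T_1$ property.
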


\begin{remark}
Recall that a topological space $X$ is called \emph{sober} if every nonempty irreducible closed subset of $X$ has a unique generic point. Sobriety is a separation axiom in between $T_0$ and $T_2$ (and independent of $T_1$). Theorem \ref{sob} in fact determines subsemimodule sober spaces.
\end{remark}

\begin{proposition}\label{sirrs} 
Every nonempty irreducible closed subset of a strongly irreducible subsemimodule space has a unique generic point.
\end{proposition}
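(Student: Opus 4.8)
The plan is to invoke Theorem~\ref{sob}: for the class $\mathrm{Dis}_M$ of strongly irreducible subsemimodules, it suffices to verify that every nonempty irreducible subbasis closed set $\mathcal{V}(N)$ contains $N^{\omega}$, and then the converse direction of that theorem propagates the conclusion from the subbasis closed sets to every nonempty irreducible closed subset (uniqueness of the generic point being guaranteed by the $T_0$ property of Proposition~\ref{ct0t1}). Now $N\subseteq N^{\omega}$ always holds by Lemma~\ref{bpvs}(\ref{axaa}), and, $\mathcal{V}(N)$ being nonempty, any $L_0\in\mathcal{V}(N)\subseteq\mathrm{Dis}_M$ satisfies $N^{\omega}\subseteq L_0\subsetneq M$, so $N^{\omega}$ is proper. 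Hence the membership $N^{\omega}\in\mathcal{V}(N)$ reduces to the single assertion that $N^{\omega}$ is strongly irreducible, that is $N^{\omega}\in\mathrm{Dis}_M$.

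Before establishing this, I would record the one feature of the strongly irreducible class that drives the argument: for any subsemimodules $A,B$ of $M$ one has $\mathcal{V}(A\cap B)\subseteq\mathcal{V}(A)\cup\mathcal{V}(B)$. Indeed, if $L\in\mathcal{V}(A\cap B)$ then $L$ is strongly irreducible with $A\cap B\subseteq L$, so by definition $A\subseteq L$ or $B\subseteq L$, that is $L\in\mathcal{V}(A)\cup\mathcal{V}(B)$. Combined with Lemma~\ref{bpvs}(\ref{vnuvn}) this is in fact an equality, so that on a strongly irreducible space the subbasis closed sets are closed under finite unions (unlike the prime case, where this forces $M$ to be a top semimodule); only the displayed inclusion is needed below.

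The heart of the proof is then to deduce the strong irreducibility of $N^{\omega}$ from the \emph{topological} irreducibility of $\mathcal{V}(N)$. Suppose $A\cap B\subseteq N^{\omega}$ for subsemimodules $A,B$. Since $N^{\omega}=\bigcap_{L\in\mathcal{V}(N)}L$, this means $A\cap B\subseteq L$ for every $L\in\mathcal{V}(N)$, i.e.\ $\mathcal{V}(N)\subseteq\mathcal{V}(A\cap B)\subseteq\mathcal{V}(A)\cup\mathcal{V}(B)$ by the inclusion above. As $\mathcal{V}(A)$ and $\mathcal{V}(B)$ are subbasis closed, hence closed, the equation $\mathcal{V}(N)=\big(\mathcal{V}(N)\cap\mathcal{V}(A)\big)\cup\big(\mathcal{V}(N)\cap\mathcal{V}(B)\big)$ exhibits $\mathcal{V}(N)$ as a union of two closed subsets. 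Irreducibility of $\mathcal{V}(N)$ forces $\mathcal{V}(N)\subseteq\mathcal{V}(A)$ or $\mathcal{V}(N)\subseteq\mathcal{V}(B)$, which, read back through $N^{\omega}=\bigcap_{L\in\mathcal{V}(N)}L$, gives $A\subseteq N^{\omega}$ or $B\subseteq N^{\omega}$. Thus $N^{\omega}$ is strongly irreducible and proper, so $N^{\omega}\in\mathrm{Dis}_M$, and Theorem~\ref{sob} finishes the argument.

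The only genuine obstacle is the passage from the algebraic containment $A\cap B\subseteq N^{\omega}$ to the topological containment $\mathcal{V}(N)\subseteq\mathcal{V}(A)\cup\mathcal{V}(B)$: it is precisely here that the strong irreducibility of each member of $\mathcal{V}(N)$ is indispensable, and it is the interlocking of this algebraic splitting with the purely topological irreducibility of $\mathcal{V}(N)$ that makes $N^{\omega}$ strongly irreducible. Everything else is routine bookkeeping with the operator $(-)^{\omega}$ and the order-reversing behaviour of $\mathcal{V}$ recorded in Lemma~\ref{bpvs}.
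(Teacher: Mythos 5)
Your proof is correct, but it takes a genuinely different route from the paper. The paper's own proof is a one-line appeal to abstraction: it observes that the lattice of subsemimodules is a multiplicative lattice with intersection as the product, that the strongly irreducible subsemimodules are precisely the prime elements of this lattice, and then cites \cite[Lemma 2.6]{FFJ22} on abstractly constructed prime spectra to conclude sobriety. You instead stay entirely inside the paper's own machinery: you verify the hypothesis of the converse half of Theorem~\ref{sob} by showing that whenever $\mathcal{V}(N)$ is nonempty and irreducible, $N^{\omega}$ is a proper strongly irreducible subsemimodule and hence a point of $\mathcal{V}(N)$. Your pivotal inclusion $\mathcal{V}(A\cap B)\subseteq \mathcal{V}(A)\cup\mathcal{V}(B)$, valid exactly because every point of the space is strongly irreducible, is the concrete content of the lattice primality that the paper's citation encapsulates, so your argument amounts to an unpacked, self-contained specialization of \cite[Lemma 2.6]{FFJ22}; what it buys is independence from the external reference, at the cost of a longer verification. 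Two steps deserve a flag, though both are sound: (i) you use that the subbasic sets $\mathcal{V}(A)$ are closed in the generated topology --- this is true, since each is a one-term intersection of a one-term union of subbasis members, even though the prose of Section~\ref{ssms} suggests otherwise; the paper itself tacitly uses this fact in the proofs of Lemma~\ref{irrc} and Theorem~\ref{sob}; (ii) you pass from subbasic irreducible closed sets to arbitrary irreducible closed sets via Theorem~\ref{sob}, whose statement is phrased only for subbasis closed sets, but whose proof of the converse direction does establish exactly the general genericity claim you need, with uniqueness supplied by $T_0$ (Proposition~\ref{ct0t1}). Your byproduct observation that for this class $\mathcal{V}(A\cap B)=\mathcal{V}(A)\cup\mathcal{V}(B)$ --- so the sets $\mathcal{V}(N)$ in fact form a closed basis on the strongly irreducible space --- is a nice bonus that the paper's proof leaves invisible.
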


\begin{proof} 
Since $\mathrm{Des}_M$ is a multiplicative lattice with product as intersection, every strongly irreducible subsemimodule becomes a prime element of this lattice and hence by \cite[Lemma 2.6]{FFJ22},  we have the desired conclusion.
\end{proof}

The usual notion of connectedness does not have a direct formulation in terms of subbasis closed sets. We, therefore, introduce a different kind of connectedness (in terms of subbasis closed sets) of subsemimodule spaces that is related to the conventional notion of connectedness.
We say a closed subbasis $\mathcal{S}$ of a topological space $X$ \emph{strongly disconnects} $X$  if there exist two non-empty subsets $A,$ $B\in\mathcal{S}$ such that $X=A\cup B$ and $A\cap B=\emptyset$.

From the above definition, it is clear that if some closed subbasis strongly disconnects a space, then the space is disconnected. Moreover, if a space is disconnected, then some closed subbasis strongly disconnects it. But this does not imply that every closed subbasis strongly disconnects the space. However, we have the following result from \cite{DG22}.

\begin{lemma}\label{th1}
If a compact space is disconnected, then every closed basis that is closed under finite intersections strongly disconnects the space. Moreover, for any subsemimodule space $\mathrm{Dis}_M$, the closed basis is closed under binary intersections.
\end{lemma}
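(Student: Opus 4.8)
The plan is to prove the two assertions separately, handling the first (the topological statement inherited from \cite{DG22}) by a compactness argument and the second by a direct computation with the operator $\mathcal{V}$. For the first assertion, suppose $X$ is compact and disconnected and let $\mathcal{B}$ be a closed basis of $X$ that is closed under finite intersections. I would begin by writing $X=C_1\cup C_2$ with $C_1,C_2$ nonempty, disjoint, and clopen, which is exactly what disconnectedness provides. The goal is then to show that each $C_i$ is itself a member of $\mathcal{B}$, for once this is established the pair $A:=C_1$ and $B:=C_2$ exhibits the required strong disconnection.

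To see that $C_1\in\mathcal{B}$, I would express the closed set $C_1$ as an intersection $C_1=\bigcap_{\alpha}B_\alpha$ of basis elements $B_\alpha\in\mathcal{B}$. Since $C_1\cap C_2=\emptyset$, the family $\{C_2\}\cup\{B_\alpha\}$ of closed sets has empty total intersection; because $C_2$ is a closed subset of the compact space $X$, the finite intersection property yields finitely many indices $\alpha_1,\dots,\alpha_n$ with $C_2\cap B_{\alpha_1}\cap\cdots\cap B_{\alpha_n}=\emptyset$. Setting $B:=B_{\alpha_1}\cap\cdots\cap B_{\alpha_n}$, closure under finite intersections gives $B\in\mathcal{B}$. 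Now $C_1\subseteq B$ because $B$ is the intersection of only a subfamily of the $B_\alpha$, while $B\cap C_2=\emptyset$ forces $B\subseteq X\setminus C_2=C_1$; hence $C_1=B\in\mathcal{B}$. Exchanging the roles of $C_1$ and $C_2$ gives $C_2\in\mathcal{B}$ as well, completing the first part. The step I expect to be the crux is this two-sided inclusion that pins $B$ down to \emph{exactly} $C_1$: compactness is precisely what converts the a priori infinite basis representation into a single finite intersection lying in $\mathcal{B}$, and it is the clopenness of the $C_i$ that makes both inclusions available.

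For the second assertion, I would verify directly that the closed basis of $\mathrm{Dis}_M$, namely the finite unions of the subbasis sets $\mathcal{V}(N)$, is closed under binary intersections. Given two basis elements $\bigcup_{i=1}^{n}\mathcal{V}(N_i)$ and $\bigcup_{j=1}^{m}\mathcal{V}(N_j')$, distributivity of intersection over union gives
\[
\left(\bigcup_{i=1}^{n}\mathcal{V}(N_i)\right)\cap\left(\bigcup_{j=1}^{m}\mathcal{V}(N_j')\right)=\bigcup_{i=1}^{n}\bigcup_{j=1}^{m}\bigl(\mathcal{V}(N_i)\cap\mathcal{V}(N_j')\bigr).
\]
By Lemma \ref{bpvs}(\ref{b}), each term satisfies $\mathcal{V}(N_i)\cap\mathcal{V}(N_j')=\mathcal{V}(N_i+N_j')$, so the right-hand side is again a finite union of subbasis sets, i.e.\ a basis element. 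This establishes closure under binary intersections, and hence, by an obvious induction, under all finite intersections; this is exactly the hypothesis required to apply the first part to the subsemimodule spaces $\mathrm{Dis}_M$. This part involves no real obstacle beyond the bookkeeping of the double union.
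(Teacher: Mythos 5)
Your proof is correct, but note that there is no internal proof in the paper to compare it against: the first assertion of Lemma \ref{th1} is imported from \cite{DG22} without argument, and the ``moreover'' clause is left implicit. Your compactness argument is the natural one, and in fact it proves slightly more than the standard \cite{DG22}-style argument needs to: that argument only extracts basis sets $F\supseteq C_1$ and $G\supseteq C_2$ with $F\cap G=\emptyset$ (these automatically satisfy $F\cup G=X$ and are nonempty, which already gives the strong disconnection), whereas you pin down $F=C_1$ and $G=C_2$ exactly, showing the clopen pieces themselves belong to the basis $\mathcal{B}$ --- a clean strengthening, made possible by the two-sided inclusion you correctly identify as the crux. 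One small step should be made explicit: when you apply the finite intersection property to the family $\{C_2\}\cup\{B_\alpha\}$, you should justify that the finite subfamily with empty intersection can be taken to contain $C_2$ and at least one $B_\alpha$; if it contained no $B_\alpha$ then $C_2=\emptyset$, and if it omitted $C_2$ then $C_1\subseteq B_{\alpha_1}\cap\cdots\cap B_{\alpha_n}=\emptyset$, both contradicting nonemptiness (alternatively, simply enlarge the subfamily, since adding sets preserves empty intersection). This is a trivial repair, not a gap. Your verification of the second assertion --- distributing the intersection of two finite unions and collapsing $\mathcal{V}(N_i)\cap\mathcal{V}(N_j')=\mathcal{V}(N_i+N_j')$ via Lemma \ref{bpvs}(\ref{b}) --- is exactly the computation the paper's subbasis construction tacitly relies on, and together with your induction from binary to finite intersections it correctly supplies the hypothesis needed to apply the first part to any $\mathrm{Dis}_M$, as Theorem \ref{cor1} then does.
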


Based on the above Lemma \ref{th1}, we immediately have the following result.

\begin{theorem}\label{cor1}
Suppose $\mathrm{Dis}_M$ is a compact space. Then  $\mathrm{Dis}_M$ is  disconnected if and only if the closed basis strongly disconnects $\mathrm{Dis}_M$. 
\end{theorem}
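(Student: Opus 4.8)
The plan is to derive Theorem~\ref{cor1} directly from Lemma~\ref{th1}, treating the latter as a black box supplying both the general topological implication and the specific structural fact that every subsemimodule space has a closed basis closed under binary intersections. First I would observe that the statement is a biconditional, so the proof splits into two implications, and the harder direction is already half-done by the hypothesis of the theorem together with Lemma~\ref{th1}.

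For the forward direction, I would assume $\mathrm{Dis}_M$ is compact and disconnected. By the second assertion of Lemma~\ref{th1}, the closed basis of $\mathrm{Dis}_M$ is closed under binary intersections, and hence under all finite intersections (a trivial induction, which I would not spell out). This places us exactly in the hypotheses of the first assertion of Lemma~\ref{th1}, which then guarantees that this closed basis strongly disconnects $\mathrm{Dis}_M$. That is precisely the conclusion needed.

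For the reverse direction, I would assume the closed basis strongly disconnects $\mathrm{Dis}_M$. The remark preceding Lemma~\ref{th1} records the elementary fact that whenever some closed subbasis strongly disconnects a space, the space is disconnected; since a closed basis is in particular a closed subbasis, strong disconnection by the closed basis immediately yields that $\mathrm{Dis}_M$ is disconnected. This direction requires no compactness and is essentially a restatement of the definition, so it poses no obstacle.

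The only genuine subtlety, and what I would flag as the main point of care rather than a true obstacle, is the interplay between \emph{subbasis} and \emph{basis}: Lemma~\ref{th1} is phrased for a closed basis, and one must be sure that the ambient closed basis of $\mathrm{Dis}_M$ (generated from the subbasis $\{\mathcal{V}(N)\}$ by finite unions, with finite intersections giving the open sets) is indeed the object referred to in both clauses of Lemma~\ref{th1}. Since Lemma~\ref{th1} already asserts closure under binary intersections for this very basis, the two clauses are compatible and the argument closes cleanly. Thus the whole proof is a two-line invocation of Lemma~\ref{th1} in each direction, and I would write it as such.
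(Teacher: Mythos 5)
Your proposal is correct and matches the paper's own treatment: the paper proves Theorem~\ref{cor1} exactly by invoking both assertions of Lemma~\ref{th1} for the forward direction (compactness plus disconnectedness, with the closed basis closed under binary and hence finite intersections) and the remark following the definition of strong disconnection for the converse. Your added care about the basis/subbasis distinction is sound but not a departure from the paper's argument.
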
   

A consequence of Lemma \ref{irrc} is the following sufficient condition for a subsemimodule space to be connected. 

\begin{theorem}\label{conis}
If  $0\in\mathrm{Dis}_M,$  then the subsemimodule space $\mathrm{Dis}_M$ is connected.
\end{theorem}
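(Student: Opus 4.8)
The plan is to deduce connectedness of the whole space from its irreducibility. First I would unpack the hypothesis $0\in\mathrm{Dis}_M$: it says that the zero subsemimodule belongs to the distinguished class. Since trivially $0\subseteq 0$, this immediately gives $0\in\mathcal{V}(0)$, so $\mathcal{V}(0)$ is exactly one of the subbasis closed sets of the form $\{\mathcal{V}(N)\mid N\in\mathcal{V}(N)\}$ studied in Lemma \ref{irrc}. Hence $\mathcal{V}(0)$ is irreducible (in fact the proof of that lemma identifies it with the closure $\mathcal{C}(0)$ of a single point, which is always irreducible).

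Next I would invoke Lemma \ref{bpvs}(\ref{a}), which asserts $\mathcal{V}(0)=\mathrm{Dis}_M$. Combining this with the previous step shows that the ambient space $\mathrm{Dis}_M$, viewed as a closed subset of itself, is irreducible.

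Finally I would appeal to the elementary topological fact that every irreducible space is connected. Concretely, if $\mathrm{Dis}_M=A\cup B$ were a separation into two disjoint nonempty closed subsets, then each of $A$ and $B$ would necessarily be a \emph{proper} closed subset (a nonempty set disjoint from another nonempty set cannot exhaust the space). This would exhibit $\mathrm{Dis}_M$ as the union of two proper closed subsets, contradicting its irreducibility. Therefore no such separation exists and $\mathrm{Dis}_M$ is connected.

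The argument is genuinely short, and I do not anticipate a real obstacle: the only step needing care is the observation that the whole space coincides with $\mathcal{V}(0)$ and thus qualifies as one of the irreducible sets produced by Lemma \ref{irrc}. Once $\mathrm{Dis}_M$ is recognized as irreducible, connectedness follows formally, so the proof amounts to correctly threading together the definitions and the two cited lemmas.
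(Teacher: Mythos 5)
Your proof is correct and follows essentially the same route as the paper: both identify $\mathrm{Dis}_M$ with $\mathcal{V}(0)$ via Lemma \ref{bpvs}(\ref{a}), apply Lemma \ref{irrc} (noting that the hypothesis $0\in\mathrm{Dis}_M$ puts $0\in\mathcal{V}(0)$, which the paper leaves implicit), and conclude via the standard fact that irreducibility implies connectedness. Your write-up merely makes explicit the details the paper compresses into one sentence.
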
  

\begin{proof}
Since by Lemma \ref{bpvs}(\ref{a}), $\mathrm{Dis}_M=\mathcal{V}(0)$ and since irreducibility implies connectedness, the claim follows from Lemma \ref{irrc}. 
\end{proof}

\begin{corollary}
Proper, finitely generated, cyclic subsemimodule spaces are connected. 
\end{corollary}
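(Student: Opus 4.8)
The plan is to derive this corollary directly from Theorem \ref{conis}, which guarantees that any subsemimodule space $\mathrm{Dis}_M$ with $0\in\mathrm{Dis}_M$ (where $0$ denotes the zero subsemimodule $\{0\}$) is connected. The entire task therefore reduces to verifying, for each of the three named classes, that the zero subsemimodule belongs to it.

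First I would record the basic observation that $\{0\}=\langle 0\rangle$, the subsemimodule generated by the single element $0\in M$; this is immediate from $r0=0$ for all $r\in R$ together with $0+0=0$, so no further elements are produced. Consequently $\{0\}$ is \emph{cyclic}, and being generated by a finite (indeed singleton) set, it is also \emph{finitely generated}. This settles two of the three cases at once, and in each of them $0\in\mathrm{Dis}_M$.

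For the class of \emph{proper} subsemimodules, I would observe that $\{0\}$ is proper exactly when $\{0\}\neq M$, that is, when $M$ is a nonzero semimodule. This is in force throughout: the standing convention $M\notin\mathrm{Dis}_M$ already presupposes that $M$ is not itself one of the subsemimodules under consideration, so $\{0\}$ is a genuine proper subsemimodule and lies in $\mathrm{Dis}_M$. Applying Theorem \ref{conis} in each of the three cases then completes the argument.

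The only point requiring care, and the nearest thing to an obstacle, is the degenerate semimodule $M=\{0\}$: here $\{0\}$ coincides with $M$ and fails to be proper, so the proper subsemimodule space would be empty rather than connected. This case is excluded by the blanket hypotheses (the assumption $M\notin\mathrm{Dis}_M$ forces $M\neq\{0\}$ for the cyclic and finitely generated classes, and nonzeroness of $M$ is the intended convention for the proper class), so the proof goes through uniformly.
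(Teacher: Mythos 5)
Your proof is correct and follows exactly the route the paper intends: the corollary is stated without proof as an immediate consequence of Theorem \ref{conis}, via the observation that the zero subsemimodule is cyclic, finitely generated, and (for nonzero $M$) proper, so $0\in\mathrm{Dis}_M$ in each case. Your attention to the degenerate case $M=\{0\}$ is a point of care the paper itself glosses over, and your resolution of it is reasonable.
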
 

Suppose $M\to M'$ is an $R$-semimodule homomorphism. Likewise rings, it is natural to expect a continuous map  $\phi\colon \mathrm{Dis}_{M'}\to \mathrm{Dis}_M$ between similar type subsemimodule spaces. It is indeed so, and we shall discuss that in the next proposition. The only issue here is that the inverse image of a particular type of subsemimodule might not be of the same type. So, we need to consider only those $R$-semimodule homomorphisms for which such preservation holds. We call this a \emph{contraction property}. Furthermore, because of the nature of our topology, instead of closed sets, in the arguments of the proofs, we shall require subbasis closed sets. 

\begin{proposition}\label{conmap}
Let $\mathrm{Dis}_M$ be a subsemimodule space and $\phi\colon M\to M'$ be an $R$-semimodule homomorphism having the contraction property with respect to $\mathrm{Dis}_M$.   Let $N'\in\mathrm{Dis}_{M'}.$ Define a map $\phi^*\colon  \mathrm{Dis}_{M'}\to \mathrm{Dis}_{M}$ by  $\phi^*(N)=\phi\inv(N)$. Then
\begin{enumerate}[\upshape (1)]
		
\item \label{contxr} the map $\phi^*$ is    continuous.
		
\item \label{shcs} If $\phi$ is  surjective, then the subsemimodule space $\mathrm{Dis}_{M'}$ is homeomorphic to the closed subspace $\mathcal{V}(\mathrm{ker}\,\phi)$ of the subsemimodule space $\mathrm{Dis}_{M}.$
		
\item \label{imde} The image  $\phi^*(\mathrm{Dis}_{M'})$ is dense in $\mathrm{Dis}_{M}$ if and only if $$\mathrm{ker}\,\phi\subseteq \bigcap_{L\in \mathrm{Dis}_{M}}L;$$
\end{enumerate}
\end{proposition}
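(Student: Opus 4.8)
The plan is to route all three parts through two set-theoretic identities and then read off the topological statements formally. Writing $\mathcal{V}'$ for the operator $\mathcal{V}$ formed inside $\mathrm{Dis}_{M'}$, the identities are
\[
(\phi^*)\inv\bigl(\mathcal{V}(N)\bigr)=\mathcal{V}'\bigl(\phi(N)\bigr)\quad(N\in\mathrm{Sub}_M),\qquad
\phi^*\bigl(\mathcal{V}'(N')\bigr)=\mathcal{V}\bigl(\phi\inv(N')\bigr)\quad(N'\in\mathrm{Sub}_{M'}).
\]
Here $\phi(N)$ and $\phi\inv(N')$ are subsemimodules because $\phi$ is a homomorphism, so every term is a legitimate subbasis closed set. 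For (\ref{contxr}), since $\tau_{\mathrm{Dis}_M}$ is generated by the closed subbasis $\{\mathcal{V}(N)\}$ and preimages commute with arbitrary unions and intersections, continuity amounts to $(\phi^*)\inv(\mathcal{V}(N))$ being closed for each $N$. First I would unwind $N'\in(\phi^*)\inv(\mathcal{V}(N))\iff N\subseteq\phi\inv(N')\iff\phi(N)\subseteq N'$, which is exactly the first identity; its right-hand side is a subbasis closed set, so $\phi^*$ is continuous.

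For (\ref{shcs}), the image of $\phi^*$ lies in $\mathcal{V}(\ker\phi)$ because $0\in N'$ forces $\ker\phi=\phi\inv(0)\subseteq\phi\inv(N')$. Surjectivity of $\phi$ gives $\phi(\phi\inv(N'))=N'$, which yields injectivity of $\phi^*$ and identifies the candidate inverse as $L\mapsto\phi(L)$ on $\mathcal{V}(\ker\phi)$; the second identity then exhibits $\phi^*$ as a closed surjection onto $\mathcal{V}(\ker\phi)$, and a continuous closed bijection is a homeomorphism. For (\ref{imde}), applying the second identity with $N'=0$ gives $\phi^*(\mathrm{Dis}_{M'})=\phi^*(\mathcal{V}'(0))=\mathcal{V}(\ker\phi)$, which is closed; hence the image is dense precisely when $\mathcal{V}(\ker\phi)=\mathrm{Dis}_M$. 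By Lemma \ref{bpvs}(\ref{a}) we have $\mathrm{Dis}_M=\mathcal{V}(0)$, so this equality reads $\ker\phi\subseteq L$ for every $L\in\mathrm{Dis}_M$, that is, $\ker\phi\subseteq\bigcap_{L\in\mathrm{Dis}_M}L$, as claimed.

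The inclusion $\phi^*(\mathcal{V}'(N'))\subseteq\mathcal{V}(\phi\inv(N'))$ needed above holds for free, since $N'\subseteq L'$ gives $\phi\inv(N')\subseteq\phi\inv(L')=\phi^*(L')$; in particular the easy direction of (\ref{imde}) (density $\Rightarrow$ kernel condition) follows at once from $\mathrm{im}\,\phi^*\subseteq\mathcal{V}(\ker\phi)$. The main obstacle is the reverse inclusion $\mathcal{V}(\phi\inv(N'))\subseteq\phi^*(\mathcal{V}'(N'))$, equivalently the assertion that $\phi\inv(\phi(L))=L$ together with $\phi(L)\in\mathrm{Dis}_{M'}$ for every $L\in\mathcal{V}(\ker\phi)$ --- in short, the correspondence between members of $\mathrm{Dis}_{M'}$ and members of $\mathrm{Dis}_M$ containing the kernel.

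I expect this correspondence to carry the entire weight of (\ref{shcs}) and the hard half of (\ref{imde}). Over a ring one would take $x$ with $\phi(x)=\phi(l)$ for some $l\in L$ and conclude $x-l\in\ker\phi\subseteq L$; because a semimodule has no additive inverses this difference argument is simply unavailable, so the lifting $\phi\inv(\phi(L))\subseteq L$ cannot be obtained naively and must instead be secured inside the class $\mathrm{Dis}_M$, using the subtractivity of its members in tandem with the contraction property of $\phi$. Once that single lifting statement is established, the remainder of the proof is the formal bookkeeping with the two displayed identities indicated above.
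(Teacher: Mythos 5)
Your part (\ref{contxr}) is correct and matches the paper, and your skeleton for (\ref{shcs}) (injectivity from $\phi(\phi\inv(N'))=N'$, closedness via the identity $\phi^*(\mathcal{V}'(N'))=\mathcal{V}(\phi\inv(N'))$ and subbasis bookkeeping) is also the paper's route. The genuine gap is precisely the ``single lifting statement'' you defer to the end: that every $L\in\mathcal{V}(\mathrm{ker}\,\phi)\cap\mathrm{Dis}_M$ satisfies $\phi(L)\in\mathrm{Dis}_{M'}$ and $\phi\inv(\phi(L))=L$. You announce that it can be ``secured inside the class $\mathrm{Dis}_M$, using the subtractivity of its members in tandem with the contraction property of $\phi$,'' but neither tool is available: $\mathrm{Dis}_M$ is an arbitrary distinguished class (prime, primal, strongly irreducible, \dots) whose members need not be subtractive, and the contraction property as defined in the paper concerns inverse images $\phi\inv(N')$, not direct images $\phi(L)$. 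Worse, even for a subtractive $L$ containing $\mathrm{ker}\,\phi$ the equality $\phi\inv(\phi(L))=L$ can fail for semimodules, since $\phi(x)=\phi(l)$ yields no additive relation between $x$ and $l$ (kernels do not determine congruences of semimodules), so subtractivity cannot replace the missing difference argument. The step you yourself say carries ``the entire weight'' of (\ref{shcs}) is therefore absent, and the repair you sketch would not go through. (To be fair, the paper is also terse at exactly this point --- it asserts $\mathrm{im}\,\phi^*=\mathcal{V}(\mathrm{ker}\,\phi)$ from the easy inclusion alone --- but a blind proof attempt still has to supply it.)

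There is a second, independent problem in your treatment of (\ref{imde}): there $\phi$ is \emph{not} assumed surjective, yet you invoke the set identity with $N'=0$ to conclude $\phi^*(\mathrm{Dis}_{M'})=\mathcal{V}(\mathrm{ker}\,\phi)$, hence that the image is closed. By your own reduction the hard inclusion of that identity is the lifting statement, which is hopeless without surjectivity; indeed for an injective non-surjective $\phi$ your claim would say $\phi^*(\mathrm{Dis}_{M'})=\mathcal{V}(0)=\mathrm{Dis}_M$, i.e.\ that \emph{every} member of $\mathrm{Dis}_M$ is a contracted subsemimodule, which is false in general. The paper avoids this by proving only the closure identity $\mathcal{C}(\phi^*(\mathcal{V}'(N')))=\mathcal{V}(\phi\inv(N'))$: the easy inclusion $\phi^*(\mathcal{V}'(N'))\subseteq\mathcal{V}(\phi\inv(N'))$ (which you have) plus an argument that no smaller closed set contains the image, and then reads off density from $\mathcal{C}(\phi^*(\mathrm{Dis}_{M'}))=\mathcal{V}(\mathrm{ker}\,\phi)$. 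So for (\ref{imde}) you should argue about the closure $\mathcal{C}(\phi^*(\mathrm{Dis}_{M'}))$ rather than the image itself; as written, your derivation of the ``kernel condition $\Rightarrow$ density'' direction rests on an identity that fails under the hypotheses of that part.
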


\begin{proof}      
To show (\ref{contxr}), let $\mathcal{V}(N)$ be a   subbasic closed set of the subsemimodule  space $\mathrm{Dis}_M.$ Observe  that $$(\phi^*)\inv(\mathcal{V}(N)) =\{N'\in \mathrm{Dis}_{M'}\mid \phi(N)\subseteq N'\}=\mathcal{V}(\langle \phi(N)\rangle),$$ and hence the map $\phi^*$  continuous.     
For the homeomorphism in (\ref{shcs}), observe that     
$\mathrm{ker}\,\phi\subseteq \phi\inv(N'),$ in other words, $\phi^*(N')\in \mathcal{V}(\mathrm{ker}\,\phi)$. This implies that $\mathrm{im}\,\phi^*=\mathcal{V}(\mathrm{ker}\,\phi).$  
Since for all $N'\in \mathrm{Dis}_{M'},$ \[\phi(\phi^*(N'))=N'\cap \mathrm{im}\,\phi=N',\] the map $\phi^*$ is injective. To show that $\phi^*$ is a closed map, first we observe that for any subbasis closed subset  $\mathcal{V}(N)$ of  $\mathrm{Dis}_{M'}$, we have \[\phi^*(\mathcal{V}(N))=\phi\inv\{ L'\in \mathrm{Dis}_{M'}\mid N\subseteq   L'\}=\mathcal{V}(\phi\inv(N)).\]   Now if $K$ is a closed subset of $\mathrm{Dis}_{M'}$ and if \[K=\bigcap_{ \alpha \in \Omega} \left(\bigcup_{ i \,= 1}^{ n_{\alpha}} \mathcal{V}(N_{ i\alpha})\right),\] then \[\phi^*(K)=\phi\inv \left(\bigcap_{ \alpha \in \Omega} \left(\bigcup_{ i = 1}^{ n_{\alpha}} \mathcal{V}(N_{ i\alpha})\right)\right)=\bigcap_{ \alpha \in \Omega} \bigcup_{ i = 1}^{ n_{\alpha}} \mathcal{V}(\phi\inv(N_{ i\alpha})),\] a closed subset of  $\mathrm{Dis}_M.$ Since $\phi^*$ is   continuous, we have the proof.
To prove (\ref{imde}), we first show that $\mathcal{C}(\phi^*(\mathcal{V}(N')))=\mathcal{V}(\phi\inv(N')),$ for all subsemimodules $N'\in \mathrm{Sub}_{M'}.$ To this end, let $L\in \phi^*(\mathcal{V}(N')).$ This implies $\phi(L)\in \mathcal{V}(N'),$ which means $N'\subseteq \phi(L).$ In other words, $L\in \mathcal{V}(\phi\inv(N')).$ The other inclusion follows from the fact that $\phi\inv(\mathcal{V}(N'))=\mathcal{V}(\phi\inv(N')).$ Since $$\mathcal{C}(\phi^*(\mathrm{Dis}_{M'}))=\mathcal{V}(\phi\inv(\mathfrak{o}))=\mathcal{V}(\mathrm{ker}\,\phi),$$ the closed subspace $\mathcal{V}(\mathrm{ker}\,\phi)$ is equal to $\mathrm{Dis}_M$ if and only if $\mathrm{ker}\,\phi\subseteq \cap_{L\in \mathrm{Dis}_M}L.$     
\end{proof}    

\begin{corollary}
The subsemimodule space $\mathrm{Dis}_{M/N}$ is homeomorphic to the closed subspace
$\mathcal{V}(N)$ of $\mathrm{Dis}_M$.   
\end{corollary}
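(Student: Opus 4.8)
The plan is to obtain this as a direct application of Proposition~\ref{conmap}(\ref{shcs}) to the canonical quotient homomorphism $\pi\colon M\to M/N$, which is the obvious surjection at hand. Three things then need checking: that $\pi$ is surjective, that $\pi$ has the contraction property with respect to $\mathrm{Dis}_M$, and that the kernel of $\pi$ produces exactly the subspace $\mathcal{V}(N)$. The first is immediate from the construction of the quotient semimodule $M/N$.

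For the contraction property, I would invoke the correspondence (lattice-isomorphism) theorem for semimodule quotients: the subsemimodules of $M/N$ stand in an inclusion-preserving bijection with the subsemimodules of $M$ containing the kernel of $\pi$, the bijection being $L'\mapsto \pi\inv(L')$. What must be verified is that this bijection restricts to the distinguished class, \emph{i.e.}\ that $\pi\inv(L')\in\mathrm{Dis}_M$ whenever $L'\in\mathrm{Dis}_{M/N}$; this is precisely the contraction property, and granting it, Proposition~\ref{conmap}(\ref{shcs}) yields a homeomorphism of $\mathrm{Dis}_{M/N}$ onto $\mathcal{V}(\ker\pi)$, realized by the map $\phi^*=\pi^*$ of that proposition.

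The step that requires genuine care---and the main obstacle---is identifying $\ker\pi$ with $N$. In the semimodule setting the quotient $M/N$ is built from the Bourne congruence, so $\ker\pi=\pi\inv(0)$ is not $N$ itself but its subtractive closure $\overline{N}=\{m\in M\mid m+n\in N \text{ for some } n\in N\}$, the smallest $k$-subsemimodule containing $N$. Thus Proposition~\ref{conmap}(\ref{shcs}) literally delivers a homeomorphism onto $\mathcal{V}(\overline{N})$, and it remains to establish $\mathcal{V}(\overline{N})=\mathcal{V}(N)$. One inclusion is free: from $N\subseteq\overline{N}$ and Lemma~\ref{bpvs}(1) we get $\mathcal{V}(\overline{N})\subseteq\mathcal{V}(N)$. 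For the reverse inclusion I would argue that any $L\in\mathrm{Dis}_M$ with $N\subseteq L$ is subtractive, so by minimality of the subtractive closure it contains $\overline{N}$, forcing $\mathcal{V}(N)\subseteq\mathcal{V}(\overline{N})$; this closes the gap for the classes under consideration. Combining the homeomorphism of the previous paragraph with this set equality then gives the asserted homeomorphism $\mathrm{Dis}_{M/N}\cong\mathcal{V}(N)$.
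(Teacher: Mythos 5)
Your overall route is the intended one: the paper gives no proof of this corollary, and it is clearly meant to follow at once from Proposition~\ref{conmap}(\ref{shcs}) applied to the canonical surjection $\pi\colon M\to M/N$, with the contraction property tacitly assumed, exactly as you do. Your observation that in the semimodule setting $\ker\pi=\pi^{-1}(0)$ is the Bourne kernel $\overline{N}$ rather than $N$ itself is correct, and it is a genuine subtlety that the paper glosses over. The problem is your patch. The step ``any $L\in\mathrm{Dis}_M$ with $N\subseteq L$ is subtractive'' is asserted without justification and is false for most of the distinguished classes of Section~\ref{prelim}: in this paper subtractivity ($k$-subsemimodules) is a separate class, and prime, primary, weakly prime, primal, strongly irreducible, maximal, etc.\ subsemimodules are not required to be subtractive. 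Concretely, take $R=M=\mathds{N}$ and $N=\langle 2,3\rangle=\{0,2,3,4,\dots\}$. Then $N$ is prime in the paper's sense (one checks $(N:M)=N$, and $rm\in N$ with $m\notin N$ forces $m=1$, hence $r=rm\in N$), but $1+2=3\in N$ with $2\in N$ and $1\notin N$, so $N$ is not subtractive, and its subtractive closure is $\overline{N}=\mathds{N}=M$. Hence for $\mathrm{Dis}_M$ the class of prime subsemimodules, $\mathcal{V}(\overline{N})=\mathcal{V}(M)=\emptyset$ while $N\in\mathcal{V}(N)\neq\emptyset$, so your claimed equality $\mathcal{V}(\overline{N})=\mathcal{V}(N)$ fails; indeed $M/N$ is the zero semimodule here, so $\mathrm{Dis}_{M/N}=\emptyset$ and the corollary itself is false as stated without an extra hypothesis.

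The honest conclusion is that your diagnosis is right but the gap cannot be closed for arbitrary $\mathrm{Dis}_M$ and arbitrary $N$: the argument goes through exactly when $\mathcal{V}(\overline{N})=\mathcal{V}(N)$, e.g.\ when $N$ is itself subtractive (then $\overline{N}=N$ and no patching is needed), or when $\mathrm{Dis}_M$ consists of $k$-subsemimodules (then your minimality-of-the-subtractive-closure argument is valid). So your write-up becomes correct if you add the hypothesis ``$N$ subtractive'' or restrict the class, rather than asserting subtractivity of every member of $\mathcal{V}(N)$; as it stands, that one sentence is a genuine gap --- one you in effect inherited from an implicit assumption in the paper's unproved statement.
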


It has been shown in \cite{HPH21} that the class of prime subsemimodules of a finitely generated top semimodule is spectral in the sense of \cite{H69}. An attempt has been made in \cite{FGS22} to classify spectral spaces among the ideal spaces of a commutative ring. It would be interesting to find out which subsemimodule spaces are spectral. It is clear that one cannot just copy the arguments from \cite{FGS22} to determine that. It will possibly require some additional methods.

\end{document}